\documentclass{siamart251216}

\usepackage{lipsum}
\usepackage{amsfonts}
\usepackage{graphicx}
\usepackage{cleveref}
\usepackage{epstopdf}
\usepackage{hyperref}
\usepackage{float}
\usepackage{yhmath}
\usepackage{tikz}
\usepackage{caption}
\usepackage{subcaption}
\usepackage{blkarray}
\usepackage{multirow}
\usepackage{xcolor}
\usepackage{colortbl}
\usepackage{rotating}
\usepackage{algorithm}
\usepackage{algpseudocode}  
\usepackage{booktabs}
\usepackage{blkarray} 

\ifpdf
  \DeclareGraphicsExtensions{.eps,.pdf,.png,.jpg}
\else
  \DeclareGraphicsExtensions{.eps}
\fi

\newcommand*{\trans}{^\top}

\newcommand{\diag}{\mathrm{diag}}

\newcommand{\R}{\mathbb{R}}
\newcommand{\C}{\mathbb{C}}
\newcommand{\N}{\mathbb{N}}

\newcommand{\dif}{\mathrm{d}}

\newcommand{\argmin}{\operatorname{argmin}}
\newcommand{\rmF}{\mathrm{F}}
\newcommand{\bfb}{\boldsymbol{b}}
\newcommand{\bfd}{\boldsymbol{d}}
\newcommand{\bfe}{\boldsymbol{e}}

\newcommand{\bfl}{\boldsymbol{l}}
\newcommand{\bfu}{\boldsymbol{u}}
\newcommand{\bfx}{\boldsymbol{x}}

\newcommand{\bfz}{\boldsymbol{z}}

\newcommand{\bfL}{\boldsymbol{L}}
\newcommand{\bfI}{\boldsymbol{I}}
\newcommand{\bfX}{\boldsymbol{X}}

\newcommand{\bfZ}{\boldsymbol{Z}}
\newcommand{\bfH}{\boldsymbol{H}}
\newcommand{\bfA}{\boldsymbol{A}}
\newcommand{\bfC}{\boldsymbol{C}}

\newcommand{\bfW}{\boldsymbol{W}}
\newcommand{\bfU}{\boldsymbol{U}}
\newcommand{\bfV}{\boldsymbol{V}}
\newcommand{\bfD}{\boldsymbol{D}}
\newcommand{\bfzero}{\boldsymbol{0}}

\newcommand{\bfLda}{\boldsymbol{\Lambda}}
\newcommand{\bfPhi}{\boldsymbol{\Phi}}

\newcommand{\eye}{\boldsymbol{I}}

\newcommand{\svd}{\texttt{svd}}
\newcommand{\eig}{\texttt{eig}}
\newcommand{\Imag}{\mathrm{Imag}}

\newcommand{\Sch}{{Schrödinger}}
\newcommand{\err}{\mathrm{err}}

\newcommand{\ee}[1]{\ensuremath{\times 10^{#1}}}

\newcommand{\mv}{\Delta M}
\newcommand{\ev}{\Delta E}

\newsiamremark{remark}{Remark}
\newsiamremark{hypothesis}{Hypothesis}
\crefname{hypothesis}{Hypothesis}{Hypotheses}
\newsiamthm{claim}{Claim}
\newsiamremark{fact}{Fact}
\crefname{fact}{Fact}{Facts}

\headers{Structure-Preserving Dynamic Mode Decomposition}{Y. Feng, W. Gao, and J. Yin}

\title{Structure-Preserving Dynamic Mode Decomposition for Highly Oscillatory Dynamics of Semiclassical Schrödinger Equations\thanks{Submitted to the editors DATE
\funding{Y. Feng and J. Yin were partially supported by Shanghai Rising-Star Program under Grant No. 24QA2700600 and National Natural Science Foundation of China under Grant No. 12571422.}}}

\author{Yizhe Feng\thanks{School of Mathematical Sciences, Fudan University, Shanghai 200433, China
  (\email{yzfeng24@m.fudan.edu.cn}, \email{wggao@fudan.edu.cn}, \email{jiayin@fudan.edu.cn}).}
\and Weiguo Gao\footnotemark[2] \thanks{Shanghai Key Laboratory of Contemporary Applied Mathematics, Shanghai 200433, China.}
\and Jia Yin\footnotemark[2]
}

\ifpdf
\hypersetup{
  pdftitle={Structure-Preserving Dynamic Mode Decomposition for Highly Oscillatory Dynamics of Semiclassical Schrödinger Equations},
  pdfauthor={Y. Feng, W. Gao, and J. Yin},
}
\fi

\begin{document}

\maketitle

\begin{abstract}
We propose two novel data-driven dynamic mode decomposition (DMD)-type methods, the {\bfseries Crank--Nicolson DMD} and the {\bfseries semi-implicit DMD}, to predict the highly oscillatory dynamics of the semiclassical \Sch\ equations efficiently and accurately. Unlike many existing DMD-type methods which directly models the dynamics of the wave function, our approach is based on learning the \Sch\ operator while explicitly incorporating mass and energy conservation laws. This approach ensures physical fidelity and endows the resulting methods with built-in model order reduction capabilities, without the necessity for additional dimensionality-reduction preprocessing. An analysis of training and prediction errors are given for theoretical guarantees. Extensive numerical experiments demonstrate the noise robustness, computational efficiency, and transferability to other equations of the proposed methods.
\end{abstract}

\begin{keywords}
dynamic mode decomposition (DMD), Schrödinger equation, semiclassical regime, Koopman operator, machine learning
\end{keywords}

\begin{MSCcodes}
34L40, 65M06, 65M12, 65P99, 37M10, 37N20, 81-08
\end{MSCcodes}

\section{Introduction}
\label{sec:introduction}

The semiclassical \Sch\ equation with a small scaled Planck constant \(\varepsilon\) \((0 < \varepsilon \ll 1)\) has the form
\begin{equation}
\label{eq:schrodinger_equation_general}
\begin{gathered}
\imath \varepsilon \partial_t u^{\varepsilon}(\bfx, t) = - \frac{\varepsilon^2}{2} \Delta u^{\varepsilon}(\bfx, t) +  V(\bfx) u^{\varepsilon}(\bfx, t), \quad t \in \R,\,\bfx \in \R^d, \\
u^{\varepsilon}(\bfx, 0) = u_0(\bfx), \quad \bfx \in \R^d,
\end{gathered}
\end{equation}
where \(V(\bfx)\in\mathbb{R}\) is a prescribed electrostatic potential and \(u^{\varepsilon}(\bfx,t)\in\mathbb{C}\) denotes the wave function. Along the dynamics in the model~\eqref{eq:schrodinger_equation_general}, the total mass and energy of the system are conserved, i.e., for any $t \geq 0$,
\begin{equation}
\label{eq:mass_conservation}
    M(u^{\varepsilon}(\cdot, t)):=\int_{\R^d}|u^{\varepsilon}(\bfx, t)|^2 d \bfx = M(u^{\varepsilon}(\cdot, 0)), 
\end{equation}
and
\begin{equation}
\label{eq:energy_conservation}
    E(u^{\varepsilon}(\cdot, t)):=\int_{\R^d}\Bigl[\frac{\varepsilon^2}{2}|\nabla u^{\varepsilon}(\bfx, t)|^2+V(\bfx)|u^{\varepsilon}(\bfx, t)|^2\Bigr] d \bfx= E(u^{\varepsilon}(\cdot, 0)).
\end{equation}
This equation plays an important role in many problems of solid-state physics~\cite{kittelIntroductionSolidState2004,slaterElectronsPerturbedPeriodic1949}. 
Many numerical methods have been developed to solve this problem, including the finite difference methods~\cite{baoUniformErrorEstimates2012}, the exponential wave integrator method~\cite{baoUniformOptimalError2014}, and the time-splitting methods~\cite{baoTimeSplittingSpectralApproximations2002}.

In the semiclassical \Sch\ equation~\eqref{eq:schrodinger_equation_general}, the wave function \(u^{\varepsilon}\) propagates high-frequency oscillations with wavelength \(\mathcal{O}(\varepsilon)\) in both space and time. 
Therefore, many existing numerical methods require finer time steps $\tau$ and spatial grid sizes $h$ to maintain accuracy as $\varepsilon$ decreases~\cite{baoTimeSplittingSpectralApproximations2002, baoNumericalStudyTimesplitting2003}. This requirement significantly increases the dimensionality of the numerical discretization, leading to high computational and memory costs and the so-called curse of dimensionality. In contrast, the underlying dynamics may be low-dimensional~\cite{nathankutzAppliedKoopmanTheory2018}, which motivates the use of model order reduction techniques such as dynamic mode decomposition (DMD) to accelerate the numerical propagation.

DMD is a data-driven and model-free dimension-reduction technique proposed by Schmid~\cite{schmidDynamicModeDecomposition2010}. It constructs a low-dimensional linear dynamical model to approximate the evolution of observables within a high-dimensional (nonlinear) dynamical system. DMD has been extensively studied in fluid mechanics~\cite{budisicAppliedKoopmanism2012,colbrookMultiverseDynamicMode2023, mezicAnalysisFluidFlows2013}, and has also found applications in diverse fields such as video processing~\cite{grosekDynamicModeDecomposition2014}, multiresolution analysis~\cite{kutzMultiresolutionDynamicMode2016}, epidemiology~\cite{proctorDiscoveringDynamicPatterns2015}, and financial trading~\cite{huaUsingDynamicMode2016, mannDynamicModeDecomposition2016}. In recent years, 
many studies have applied DMD-type methods to predict dynamics of quantum systems governed by \Sch-type and related equations. Goldschmidt et al. proposed the bilinear DMD, which learns separate drift and control generators from data, thereby preserving the bilinear Hamiltonian structure of quantum control systems~\cite{goldschmidtBilinearDynamicMode2021}, while Kaneko et al. applied DMD to forecast long-time dynamics in strongly entangled quantum many-body systems~\cite{kanekoForecastingLongtimeDynamics2025}, and Yin et al. employed DMD and its high-order variant to analyze and predict non-equilibrium many-body dynamics described by the Kadanoff--Baym equations~\cite{yinAnalyzingPredictingNonequilibrium2023,YIN2022101843}. For systems with conservation laws, Baddoo et al. advanced the physics-informed DMD (piDMD) which incorporates the conservation laws to enhance robustness and predictive accuracy~\cite{baddooPhysicsinformedDynamicMode2023}. 

However, for the highly oscillatory dynamics of the semiclassical Schr\"{o}dinger equation, existing DMD methods may suffer from strong sensitivity to noise~\cite{dawsonCharacterizingCorrectingEffect2016, dukeErrorAnalysisDynamic2012}, limited effectiveness in model order reduction, high computational costs, and physical inconsistency. None of the existing methods is able to simultaneously address all of these challenges, resulting in inefficient or inaccurate performance.


To address the issue, we propose two novel DMD-type algorithms, termed as \textbf{Crank--Nicolson DMD (CN-DMD)} and \textbf{semi-implicit DMD (SI-DMD)}. In contrast to many existing DMD-type methods, which directly models the evolution of the wave function, our approach focuses on learning the underlying \Sch\ operator, with two schemes corresponding to different strategies for approximating the time derivative. This perspective enables the systematic incorporation of physical constraints. Moreover, it endows the resulting models with built-in model order reduction capabilities, without the need for additional dimensionality-reduction preprocessing as required in piDMD. The proposed CN-DMD and SI-DMD algorithms are closely related to the Crank--Nicolson finite difference (CNFD) method and the semi-implicit finite difference (SIFD) method, respectively. This close connection enhances the interpretability of the algorithms from a numerical-analysis perspective.

The rest of this paper is structured as follows. In Section~\ref{sec:preliminary}, we review the classical DMD and its variant, piDMD. Section~\ref{sec:algorithm_implementations} introduces the CN-DMD and SI-DMD algorithms and demonstrates their physical consistency and built-in model-order reduction capabilities. Error and computational complexity analyses are also provided. In Section~\ref{sec:numerical_examples}, we present extensive numerical experiments to evaluate the performance, robustness, and advantages of model order reduction of CN-DMD and SI-DMD compared to the classical DMD and piDMD. Finally, we conclude our work in Section~\ref{sec:conclusion}.

\section{Dynamic mode decomposition}
\label{sec:preliminary}
In this section, we briefly describe the ideas and the numerical implementations of the classical DMD and its variant, piDMD. Furthermore, we illustrate the differences between these two DMD algorithms through a toy model.

\subsection{Notations}
The following notations will be employed throughout this work. Vectors in \(\R^n\) and \(\C^n\) are denoted by bold lowercase letters, e.g., \(\bfx = [x_1, x_2, \dots, x_n]\trans\). Real and complex matrices in \(\R^{n \times m}\) and \(\C^{n \times m}\) are denoted by bold uppercase letters, e.g., \(\bfX\).  The identity matrix is always denoted as $\bfI$. The conjugate transpose of a matrix \(\bfX\) is denoted by \(\bfX^*\). The Euclidean (or $2$-) norm of an $n$-dimensional vector \(\bfx\) is defined as
\begin{equation*}
    \|\bfx\|_2 = \Bigl(\,\sum_{k=1}^n \left|x_k\right|^2\Bigr)^{1/2}.
\end{equation*}
The Frobenius norm of an $n$-by-$m$ matrix \(\bfX\) is defined as the square root of the sum of the absolute squares of its entries,
\begin{equation*}
    \|\bfX\|_{\rmF} = \Bigl(\,\sum_{i=1}^n \sum_{j=1}^m \left|\bfX_{i, j}\right|^2\,\Bigr)^{1/2}.
\end{equation*}
Infinite-dimensional linear operators are denoted by calligraphic letters, e.g., \(\mathcal{L}\) or \(\mathcal{A}\).

\subsection{The classical DMD}
\label{subsec:dmd}
Consider a continuous dynamical system described by an ordinary differential equation of the form
\begin{equation}
\label{eq:model_problem}
    \frac{\dif \bfx}{\dif t} = \mathbf{f}(\bfx(t)), \quad t \geq 0. 
\end{equation}
where \(\bfx(t) = [x_1(t), x_2(t), \ldots, x_n(t)]\trans \in \C^n\) is a time-dependent state variable, and \(\mathbf{f} \colon \C^n \rightarrow \C^n\) is a nonlinear function of \(\bfx\). We denote the observed data as \(\{\bfx_k\}_{k=0}^m\), where $m + 1$ denotes the number of snapshots and $\bfx_k:=\bfx(t_k)$ represents the state value vector at time $t_k = k \Delta t$ for $k=0, 1, \dots, m$. 

The goal of DMD is to construct a linear matrix $\bfL^\text{DMD}$ such that  
\begin{equation}
\label{eq:model_problem_discretization}
    \bfx_{k+1} \approx \bfL^{\text{DMD}} \bfx_k, \quad k \geq 0.
\end{equation}  
This strategy follows from the Koopman theory~\cite{budisicAppliedKoopmanism2012,colbrookMultiverseDynamicMode2023},which maps the nonlinear dynamical system into an infinite-dimensional linear dynamical system acting on observable functions. 
The approximation~\eqref{eq:model_problem_discretization} follows from the solution to the least-squares problem:  
\begin{equation}
\label{eq:DMD_lsq}
    \bfL^{\text{DMD}} \in \underset{\bfL \in \C^{n \times n}}{\argmin} \|\bfX_2 - \bfL\bfX_1\|_{\rmF},
\end{equation}  
where the data matrices are defined as 
\begin{equation}
\label{eq:DMD_data_X1_X2}
    \bfX_1 = \left[\bfx_0, \dots, \bfx_{m-1}\right], \quad
    \bfX_2 = \left[\bfx_1, \dots, \bfx_m\right].
\end{equation}
The solution to \eqref{eq:DMD_lsq} with the minimal Frobenius norm gives
\begin{equation}
\label{eq:DMD_matrix}
    \bfL^{\text{DMD}}=\bfX_2 \bfX_1^{\dagger}.
\end{equation}
The pseudoinverse \(\bfX_1^{\dagger}\) can be obtained from the singular value decomposition (SVD) of \(\bfX_1 = \bfU_{\bfX_1} \boldsymbol{\Sigma}_{\bfX_1} \bfV_{\bfX_1}^*\), where \(\bfU_{\bfX_1}\in \C^{n \times n}\), \(\boldsymbol{\Sigma}_{\bfX_1} \in \C^{n \times m}\) and \(\bfV_{\bfX_1}\in \C^{m \times m}\). Also, we denote \(\sigma_1 \geq \sigma_2 \geq \cdots \geq \sigma_n\) as the singular values of $\bfX_1$. For brevity, we omit the subscripts of $\bfU_{\bfX_1}$, $\boldsymbol{\Sigma}_{\bfX_1}$, $\bfV_{\bfX_1}$ and the superscript of $\bfL^{\text{DMD}}$ in the following discussion.

For many large-scale systems, the corresponding data matrix \(\bfX_1\) may have a low rank \(r \ll \min \{n, m\}\), or the singular values on the diagonal of \(\boldsymbol{\Sigma}\) decay rapidly. In these cases, we say that the underlying dynamics is low-dimensional and the dominant dynamic modes can be obtained by projecting \(\bfL\) onto the subspace spanned by the leading right singular vectors of \(\bfL\). We take\footnote{For simplicity, we adopt the same slicing notation as in MATLAB.}
\begin{equation*}
    \widetilde{\bfU} = \bfU(:, 1\!:\!r), \quad 
    \widetilde{\boldsymbol{\Sigma}} = \boldsymbol{\Sigma}(1\!:\!r, 1\!:\!r), \quad 
    \widetilde{\bfV} = \bfV(:, 1\!:\!r),
\end{equation*}
where the truncation rank \(r\) is defined as  
\begin{equation*}
    r = \max \left\{ i : \sigma_i > \texttt{tol} \cdot \sigma_1 \right\}
\end{equation*} 
with a prescribed tolerance \(\texttt{tol}\) (e.g., \(\texttt{tol} = 10^{-6}\)). 
Substituting \(\bfX_1 \approx \widetilde{\bfU} \widetilde{\boldsymbol{\Sigma}} \widetilde{\bfV}^*\) into~\eqref{eq:DMD_matrix} and projecting yields a rank-\(r\) approximation
\begin{equation*}
\widetilde{\bfL} =  \widetilde{\bfU}^* \bfL \,\widetilde{\bfU}
\approx
\widetilde{\bfU}^* \bigl(\bfX_2 \widetilde{\bfV} \widetilde{\boldsymbol{\Sigma}}^{-1} \widetilde{\bfU}^*  \bigr)
\widetilde{\bfU}=\widetilde{\bfU}^* \bfX_2 \widetilde{\bfV} \widetilde{\boldsymbol{\Sigma}}^{-1}.
\end{equation*}

To propagate the original system \eqref{eq:model_problem_discretization}, what remains to be solved is the eigenvalue problem \(\widetilde{\bfL} \bfW=\bfW \bfLda\) 
where \(\bfLda = \diag(\lambda_1,\,\dots,\,\lambda_r)\) is composed of the eigenvalues, and the columns of \(\bfW\) give the corresponding eigenvectors. To obtain spectral modes in the original state space of \(\C^n\), we perform the transformation
\(
    \bfPhi=\bfX_2 \widetilde{\bfV} \widetilde{\boldsymbol{\Sigma}}^{-1} \bfW .
\)
The columns of \(\bfPhi = [\phi_1,\dots,\phi_r]\) are called the DMD modes. Denote
\begin{equation*}
    \boldsymbol{\Omega}=
    \diag(\imath \,\omega_1^{\mathrm{DMD}},\,\dots,\, \imath\,  \omega_r^{\mathrm{DMD}}), \quad \omega_{\ell}^{\mathrm{DMD}}=-\imath \frac{\ln \lambda_{\ell}}{\Delta t}, \quad \ell=1, \ldots, r,
\end{equation*}
then the dynamics of \(\bfx\) can be expressed as
\begin{equation*}
    \bfx(t) \approx \sum_{\ell=1}^r \boldsymbol{\phi}_{\ell} \exp \left(i \omega_{\ell}^{\mathrm{DMD}} t\right) b_{\ell}=\bfPhi \exp (\Omega t) \bfb.
\end{equation*}
Specifically, the system dynamics at time \(t_k = k \Delta t\) for any nonnegative integer \(k\) can be approximated by  
\begin{equation*}
    \bfx(t_k) = \bfx_k \approx \bfPhi \bfLda^k \bfb.
\end{equation*}
In the expressions, the amplitude vector \(\bfb=\left[b_1, \ldots, b_r\right]\trans\) is taken as the projection of the initial value onto the DMD modes 
\(
    \bfb=\bfPhi^{\dagger} \bfx_0.
\)
This completes the procedures of DMD.

From the description of the DMD procedure, it is straightforward to observe that the major computational cost arises from the SVD, which requires \(\mathcal{O}\!\left(\min (m^2 n, m n^2)\right)\) operations. If we employ a truncated SVD with rank $r$ instead, the cost can be reduced to \(\mathcal{O}\!\left(nmr\right)\).

As mentioned in Section \ref{sec:introduction}, DMD is an equation-free, data-driven method. There is no need to know the exact form of the underlying dynamics function \(\mathbf{f}( \cdot )\) in \eqref{eq:model_problem}. By utilizing the data from first few time steps, this method enables the prediction of future system states. Moreover, focusing on the \(r\) dominant modes and frequencies rather than the full spectrum reduces the computational cost of the simulation. As a result, this method can be of great value for many complicated nonlinear and high-dimensional dynamical systems.

\subsection{The physics-informed DMD (piDMD)}
To introduce the methodology, we restrict our attention to the one-dimensional (\(d=1\)) semiclassical \Sch\ equation~\eqref{eq:schrodinger_equation_general} with periodic boundary conditions on the interval $(a,b)$, which reads
\begin{equation}
\label{eq:schrodinger_equation}
\begin{gathered}
\varepsilon u_t^{\varepsilon} - \imath \frac{\varepsilon^2}{2} u_{x x}^{\varepsilon} + \imath V(x) u^{\varepsilon}=0, \quad a<x<b, \quad t>0, \\
u^{\varepsilon}(x, t=0)=u_0^{\varepsilon}(x), \quad a \leq x \leq b, \\
u^{\varepsilon}(a, t)=u^{\varepsilon}(b, t), \quad u_x^{\varepsilon}(a, t)=u_x^{\varepsilon}(b, t), \quad t>0 .
\end{gathered}
\end{equation}
The wave function $u^{\varepsilon}$ is mass-preserving~\cite{baoTimeSplittingSpectralApproximations2002} and if we use time-splitting Fourier pseudospectral method to numerically solve the equation, the numerical solution preserves the discretized mass, that is, for any $t \geq 0$,  
\begin{equation*}
    \left\|\bfu^{\varepsilon}(t) \right\|_2 \approx \left\|\bfu^{\varepsilon}(0) \right\|_2,
\end{equation*}
where $\bfu^{\varepsilon}(t)$ denotes the numerical solution at equally spaced grid points within the wave propagation region at time $t$, and $\|\cdot\|_2$ denotes the Euclidean norm of a vector. 
Furthermore, the evolution operator mapping numerical results at any two adjacent time points can be modeled as a constant unitary matrix; that is, the discretized wave function $\bfu^{\varepsilon}(t_k)$ at time point $t_k = k\Delta t$ can be computed by the linear unitary evolution
\begin{equation*}
    \bfu^{\varepsilon}(t_{k+1}) = \bfL \, \bfu^{\varepsilon}(t_{k}), \quad \bfL \bfL^* = \bfL^* \bfL = \bfI.
\end{equation*}
The iteration formula of the time-splitting method for~\eqref{eq:schrodinger_equation}, and even for nonlinear \Sch\ equations, naturally possesses this structure~\cite{baoTimeSplittingSpectralApproximations2002, baoNumericalStudyTimesplitting2003}.

The physics-informed DMD (piDMD) framework is designed to learn such a unitary evolution matrix from data by solving a unitary constraint least square problem~\cite{baddooPhysicsinformedDynamicMode2023}. For consistency, we consider the general  model~\eqref{eq:model_problem} and data matrices $\bfX_1$, $\bfX_2$~\eqref{eq:DMD_data_X1_X2}. In piDMD, the linear operator \(\bfL \in \C^{n \times n}\) is obtained by solving the following constrained least-squares problem
\begin{equation}
\label{eq:piDMD_lsq}
    \bfL^{\text{piDMD}} \in \underset{\bfL \bfL^* = \bfI}{\argmin} \|\bfX_2 - \bfL\bfX_1\|_{\mathrm{F}}.
\end{equation}
It was derived by Schönemann~\cite{schonemannGeneralizedSolutionOrthogonal1966} that one of the solution is 
\begin{equation}
\label{eq:piDMD_matrix}
    \bfL^{\text{piDMD}} =\bfU_{\bfX_2 \bfX_1^*} \bfV_{\bfX_2 \bfX_1^*}^*,
\end{equation}
where \(\bfU_{\bfX_2 \bfX_1^*} \in \C^{n \times n}\) and \(\bfV_{\bfX_2 \bfX_1^*}^* \in \C^{n \times n}\) denotes the full left and right singular vector matrices of  \(\bfX_2 \bfX_1^* \in \C^{n \times n}\), respectively. The solution is unique if and only if $\bfX_2 \bfX_1^*$ has full rank. Once the evolution matrix $\bfL^{\text{piDMD}}$ is obtained, the system can be approximately propagated in the same manner as in classical DMD via
\begin{equation}
\label{eq:unitary_model_problem_discretization}
    \bfx_{k+1} \approx \bfL^{\text{piDMD}} \bfx_k, \quad k \geq 0.
\end{equation}

\begin{remark}
\label{rmk:toy_model}
To illustrate the relationship between the classical DMD and its variant piDMD, we analyze a toy model. We consider the data $\{\bfx_k\}_{k=0}^{m}$ generated by a unitary matrix
\begin{equation}
\label{eq:toy_model_data}
    \bfx_k = \sum_{i=1}^r b_i \boldsymbol{\phi}_i \lambda_i^k \in \C^n,
\end{equation}
where eigenvectors $\{\boldsymbol{\phi}_i\}_{i=1}^r$ is an othornormal set and eigenvalues $\{\lambda_i\}_{i=1}^r$ are mutually distinct and lie on the unit circle. We apply both the classical DMD and piDMD independently to this problem. We denote $\bfPhi_r = \left[\boldsymbol{\phi}_1, \dots, \boldsymbol{\phi}_r \right] \in \C^{n \times r}$, and $\bfPhi_r^{\bot} \in \C^{n \times (n-r)}$ as its orthogonal complement, the vector $\bfb_r = [b_1, \dots, b_r]\trans \in \C^r$, $\bfLda_r = \diag(\lambda_1, \dots, \lambda_r) \in \C^{r \times r}$, and the Vandermonde matrix 
\begin{equation*}
    \bfZ_r = \begin{bmatrix}
        1 & \lambda_1 & \dots & \lambda_1^{m-1} \\
        \vdots & \vdots & \ddots & \vdots \\
        1 & \lambda_r & \dots & \lambda_r^{m-1} \\ 
    \end{bmatrix} \in \C^{r \times m}.
\end{equation*}
Under this setting, data matrices $\bfX_1$ and $\bfX_2$~\eqref{eq:DMD_data_X1_X2} can be expressed as
\begin{equation}
\label{eq:toymodel_X1}
    \bfX_1 = [\bfx_0, \dots, \bfx_{m-1}] = \bfPhi_r \, \diag(\bfb_r) \, \bfZ_r,
\end{equation}
and
\begin{equation}
\label{eq:toymodel_X2}
    \bfX_2 = [\bfx_1, \dots, \bfx_m] = \bfPhi_r \, \bfLda_r \, \diag(\bfb_r) \, \bfZ_r.
\end{equation}

\paragraph{The classical DMD}
The evolution matrix in DMD is then obtained from \eqref{eq:DMD_matrix} as
\begin{equation}
\label{eq:toymodel_DMD_matrix}
    \bfL^{\rm DMD} = \bfPhi_r \, \bfLda_r \, \bfPhi_r^*.
\end{equation}

\paragraph{The piDMD}
Using decompositions \eqref{eq:toymodel_X1} and \eqref{eq:toymodel_X2}, we obtain the decomposition
\[\bfX_2 \bfX_1^* = (\bfPhi_r\, \bfLda_r) (\diag(\bfb_r) \, \bfZ_r)(\diag(\bfb_r) \, \bfZ_r)^* \bfPhi_r.\]
By taking the SVD of $\diag(\bfb_r) \, \bfZ_r$, we can derive the SVD of \(\bfX_2 \bfX_1^*\), and subsequently compute the evolution matrix in piDMD from \eqref{eq:piDMD_matrix} as
\begin{equation}
\label{eq:toymodel_piDMD_matrix}
    \bfL^{\rm piDMD} = \bfPhi_r \, \bfLda_r \, \bfPhi_r^* + \bfPhi_r^{\bot} \bigl(\bfPhi_r^{\bot}\bigr)^*.
\end{equation}
Comparing \eqref{eq:toymodel_DMD_matrix} and \eqref{eq:toymodel_piDMD_matrix}, we observe that in this toy model the additional term $\bfPhi_r^{\bot}(\bfPhi_r^{\bot})^*$ renders $\bfL^{\rm piDMD}$ unitary, while leaving the predicted solutions $\{\bfx_k\}_{k=0}^{m}$ unchanged.
\end{remark}

\begin{remark}
The orthogonal Procrustes problem \eqref{eq:piDMD_lsq} has the same solution as the total least squares problem when the solution is constrained to be orthogonal~\cite{arunUnitarilyConstrainedTotal1992}.
The total least-squares formulation is well known to exhibit enhanced robustness in the presence of noise. This equivalence therefore provides a theoretical explanation for the robustness of piDMD under noisy measurements. By contrast, the classical DMD lacks such structural constraints and is therefore more sensitive to noise \cite{dawsonCharacterizingCorrectingEffect2016,dukeErrorAnalysisDynamic2012}. Consequently, the classical DMD and its variant piDMD may perform significantly differently in certain extreme cases.
\end{remark}

\section{Algorithm design}
\label{sec:algorithm_implementations}
\subsection{Main idea}

As discussed in section~\ref{sec:preliminary}, piDMD requires computing the full SVD of an $n \times n$ matrix, the computational complexity of which scales as $\mathcal{O}(n^3)$. In addition, the prediction of future steps requires computing and storing the entire unitary evolution matrix, so the memory cost is at order $\mathcal{O}(n^2)$. Consequently, both the computational and storage requirements will become prohibitively expensive for high-dimensional systems. To reduce these costs, a common remedy is to project the data onto the leading proper orthogonal decomposition (POD) modes \cite{chatterjeeIntroductionProperOrthogonal2000} and construct the model in the reduced subspace \cite{baddooPhysicsinformedDynamicMode2023}. However, the resulting reduced operator is no longer unitary on the full space.

On the other hand, although the classical DMD exhibits strong performance in model-order reduction, it is well known to suffer from pronounced sensitivity to noise~\cite{ dawsonCharacterizingCorrectingEffect2016,dukeErrorAnalysisDynamic2012} and the absence of physical constraints such as mass conservation, which may result in structural instability and unphysical predictions. 


To enhance the robustness and enforce the physical constraints while maintaining the intrinsic reduced-order properties of classical DMD, we propose CN-DMD and SI-DMD. Unlike piDMD with POD preprocessing, these methods learn physically consistent approximations of the underlying \Sch\ operator by explicitly enforcing the Hermitian structure, which can implicitly induce a unitary evolution operator on the whole space. 



\subsection{The Crank--Nicolson DMD}
\subsubsection{The method}
For simplicity, we rewrite the first equation in~\eqref{eq:schrodinger_equation} as
\begin{equation}
\label{eq:Schordinger_equation_reduced}
    \imath \partial_t u = \mathcal{A} u,
\end{equation}
where \(\mathcal{A}\) denotes the \Sch\ operator. Our method uses trajectory data to derive a finite-dimensional approximation \(\bfA^{\text{CN}} \in \C^{n \times n}\) to this operator. 
Given state value data~\(\{\bfx_k\}_{k=0}^{m}\) (\(m \leq n\)), we construct the augmented data matrices  
\begin{equation}
\label{eq:CN_DMD_data_matrix}
    \bfX_1 = \bigl[\bfx_{\frac{1}{2}}, \cdots, \bfx_{m-1 + \frac{1}{2}}\bigr], \quad 
    \bfX_2 = \bigl[\imath \delta_t^+ \bfx_0, \cdots, \imath \delta_t^+ \bfx_{m-1}\bigr],
\end{equation}
where, for any $k \in \{0, 1, \dots, m-1\}$,
\begin{equation*}
    \bfx_{k + \frac{1}{2}}=\frac{\bfx_k + \bfx_{k+1}}{2}, 
    \quad
    \delta_t^+ \bfx_k = \frac{\bfx_{k+1} - \bfx_k}{\tau}.
\end{equation*}  
Here, $\delta_t^+$ denotes the forward finite difference approximation of the time derivative \(\partial_t\), and $\tau>0$ is the time step between two consecutive data points. To incorporate physical constraints, we construct the discretized \Sch\ operator $\bfA^{\text{CN}} \in \C^{n \times n}$ by solving the following this Hermitian Procrustes problem~\cite{highamSymmetricProcrustesProblem1988}:
\begin{equation}
\label{eq:Hermitian_Procrustes_problem}
    \bfA^{\text{CN}} \in \underset{\bfA = \bfA^*}{\operatorname{argmin}}\|\bfX_2 - \bfA \bfX_1\|_{\rmF},
\end{equation}
The minimal Frobenius norm solution is given by
\begin{equation}
\label{eq:CN_DMD_Hermite_matrix}
    \bfA^{\text{CN}} = \bfU_{\bfX_1} \bfH \bfU_{\bfX_1}^*,
\end{equation}
where the entries of \(\bfH\) are defined, for any \(1 \le i,j \le n\), by
\begin{equation}
\label{eq:CN_DMD_H}
\bfH_{i,j} = \overline{\bfH_{j,i}} =
\begin{cases}
\displaystyle\frac{\sigma_i \,\overline{\bfC_{ji}}+\sigma_j \,\bfC_{ij}}{\sigma_i^2+\sigma_j^2}, & \text{if } \sigma_i^2+\sigma_j^2 \neq 0, \\[0.6em]
0, & \text{otherwise},
\end{cases}
\end{equation}
with \(\bfC = \bigl[\bfU_{\bfX_1}^* \bfX_2 \bfV_{\bfX_1}, \bfzero\bigr] \in \C^{n \times n}\).
Here, \(\bfU_{\bfX_1},\, \bfV_{\bfX_1}\) and \(\{\sigma_i\}\) are defined as in Subsection~\ref{subsec:dmd}. As we will show below, the decomposition \eqref{eq:CN_DMD_Hermite_matrix} of $\bfA^{\text{CN}}$ offers a reduction in the model order. In addition to model order reduction and mass conservation, another advantage for imposing the Hermitian constraint is its reduced sensitivity to noise compared with the unconstrained model~\cite[Lemma B.1]{baddooPhysicsinformedDynamicMode2023}.


For brevity, we omit the superscript of $\bfA^{\text{CN}}$ onwards in this subsection. Furthermore, we observe that \eqref{eq:Hermitian_Procrustes_problem} yields the relation  
\begin{equation*}
    \imath\frac{\bfx_{k+1} - \bfx_k}{\tau} \approx 
    \bfA \frac{\bfx_{k+1} + \bfx_k}{2}.
\end{equation*}
By a simple transformation, this leads to a recurrence of the Crank--Nicolson type which serves as the main iterative formula of our Crank--Nicolson DMD (CN-DMD) algorithm
\begin{equation}
\label{eq:CN_DMD_iteration}
    \bfx_{k+1} = \Bigl(\eye + \imath \frac{\tau}{2} \bfA \Bigr)^{-1} \Bigl(\eye - \imath\frac{\tau}{2} \bfA \Bigr) \bfx_{k},
\end{equation}
where $\bfx_k \in \C^n$ denotes the approximated solution for the $k$th-step and $\eye \in \C^{n \times n}$ denotes the identity matrix. Remarkably, this method is unconditionally stable, due to its conservation of discretized mass and energy.

\begin{theorem}
The CN-DMD preserves both the discretized mass and the discretized energy. In particular,
\begin{equation}\label{eq:mass}
    \|\bfx_k\|_2 = \|\bfx_0\|_2, \quad 
    \text{for any } k \in \N.
\end{equation}
Moreover, the discretized quadratic energy functional
\begin{equation}
\label{eq:discretized_energy}
    E(\bfx) := \bfx^* \bfA \bfx,
\end{equation}
which is a finite-dimensional discretization associated with the Hermitian operator \(\bfA\) of the continuous energy functional
\(\langle u^{\varepsilon}, \mathcal{A} u^{\varepsilon} \rangle_{L^2}\) in \eqref{eq:energy_conservation}, is conserved, i.e.,
\[
    E(\bfx_k) = E(\bfx_0), \quad 
    \text{for any } k \in \N.
\]
\end{theorem}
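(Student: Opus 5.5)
The plan is to exploit the Hermitian structure $\bfA = \bfA^*$ guaranteed by the constrained problem \eqref{eq:Hermitian_Procrustes_problem}. Write the CN-DMD propagator as $\bfL := (\eye + \imath\frac{\tau}{2}\bfA)^{-1}(\eye - \imath\frac{\tau}{2}\bfA)$, the Cayley transform of $\bfA$.

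First we need $\bfL$ to be well defined. Since $\bfA$ is Hermitian, its eigenvalues $\mu_j$ are real. Hence the eigenvalues $1+\imath\frac{\tau}{2}\mu_j$ of $\eye+\imath\frac{\tau}{2}\bfA$ have modulus at least $1$, so this matrix is invertible for every $\tau>0$.

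Next we show that $\bfL$ is unitary and commutes with $\bfA$. Diagonalize $\bfA = \bfW\bfD\bfW^*$ with $\bfW$ unitary and $\bfD$ real diagonal; equivalently, use the decomposition \eqref{eq:CN_DMD_Hermite_matrix} and diagonalize $\bfH$. Then
\[
\bfL = \bfW\,\diag\!\Bigl(\frac{1-\imath\tau\mu_j/2}{1+\imath\tau\mu_j/2}\Bigr)\bfW^*.
\]
Each diagonal entry is a quotient of complex conjugates and so has modulus one, which makes $\bfL$ unitary. Also, $\bfL$ and $\bfA$ are diagonal in the same basis, so $\bfL\bfA = \bfA\bfL$. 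Alternatively, one can avoid diagonalization: $\eye\pm\imath\frac{\tau}{2}\bfA$ commute, and $(\eye-\imath\frac{\tau}{2}\bfA)^* = \eye+\imath\frac{\tau}{2}\bfA$, from which $\bfL^*\bfL=\eye$ follows directly.

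Both conservation laws then follow. For mass, $\|\bfx_{k+1}\|_2 = \|\bfL\bfx_k\|_2 = \|\bfx_k\|_2$, and induction gives \eqref{eq:mass}. For energy,
\[
E(\bfx_{k+1}) = \bfx_k^*\bfL^*\bfA\bfL\bfx_k = \bfx_k^*\bfL^*\bfL\bfA\bfx_k = \bfx_k^*\bfA\bfx_k,
\]
using commutativity and then unitarity, and induction finishes the proof.

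As a cross-check, one can argue directly from the scheme $\imath(\bfx_{k+1}-\bfx_k)/\tau = \bfA(\bfx_{k+1}+\bfx_k)/2$. Take the inner product with $\bfx_{k+1}+\bfx_k$ and then the real part; the right-hand side is real because $\bfA$ is Hermitian, which yields $\|\bfx_{k+1}\|^2=\|\bfx_k\|^2$. Take the inner product with $\bfx_{k+1}-\bfx_k$ instead to obtain the energy identity.

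There is no real obstacle here. The only points needing care are the invertibility of $\eye+\imath\frac{\tau}{2}\bfA$ and the fact that $\bfA$ is exactly Hermitian. The latter holds by construction, since $\bfH$ in \eqref{eq:CN_DMD_H} satisfies $\bfH_{i,j}=\overline{\bfH_{j,i}}$ and $\bfU_{\bfX_1}$ is unitary.
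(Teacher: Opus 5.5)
Your proof is correct and follows the paper's argument: unitarity of the Cayley transform $\bfL$ gives mass conservation, and $\bfL^*\bfA\bfL=\bfA\bfL^*\bfL=\bfA$ gives energy conservation. You also justify the invertibility of $\eye+\imath\frac{\tau}{2}\bfA$ and the commutation $\bfA\bfL=\bfL\bfA$, which the paper uses without stating. One small slip in your optional cross-check: $(\bfx_{k+1}+\bfx_k)^*\bfA(\bfx_{k+1}+\bfx_k)$ is real, so it is the imaginary part of the tested identity, not the real part, that yields $\|\bfx_{k+1}\|_2=\|\bfx_k\|_2$.
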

\begin{proof}
    The mass conservation~\eqref{eq:mass} follows directly from the fact that the evolution matrix 
    \begin{equation}
    \label{eq:CN_DMD_evolution_matrix}
        \bfL = \Bigl(\eye + \imath \frac{\tau}{2} \bfA \Bigr)^{-1}\Bigl(\eye - \imath \frac{\tau}{2} \bfA \Bigr).
    \end{equation}
    is unitary.
    The energy conservation is established through the relation
    \begin{equation}
    \label{eq:CN_energy_preserving_iteration}
    \begin{aligned}
        E(\bfx_{k+1}) &= E(\bfL \bfx_{k}) 
        = \bfx_k^* \, \bfL^* \bfA \, \bfL \bfx_k 
        = \bfx_k^* \, \bfA \left(\bfL^* \bfL\right) \bfx_k \\
        &= \bfx_k^* \, \bfA \bfx_k = E(\bfx_k),
    \end{aligned}
    \end{equation}
    where the first equality follows from the iteration \eqref{eq:CN_DMD_iteration}, while the third and fourth equalities follow from \eqref{eq:CN_DMD_evolution_matrix} together with the Hermitian property of $\bfA$.
\end{proof}

\begin{remark}
We remark here that stronger constraints on the operator may also be integrated, but they often face implementation challenges. For instance, imposing a positive-definite condition on the operator precludes an analytical solution to the constrained least-squares problem. As a result, we have to use iterative optimization methods such as gradient descent, which increases the computational cost~\cite{gillisSemianalyticalApproachPositive2018}. Alternatively, while the tridiagonal structure aligns more closely with the second-order central finite difference method and~\eqref{eq:Hermitian_Procrustes_problem} still admits explicit solutions under this condition~\cite{baddooPhysicsinformedDynamicMode2023}, it is difficult to reconcile with model order reduction techniques. Thus, the computational benefits of reduced order models cannot be effectively realized under this constraint. Consequently, our approach strikes a balance between computational efficiency and physical fidelity.
\end{remark}

\subsubsection{Error analysis}
\label{subsec:error_analysis}
In this subsection, we present an error analysis of the CN-DMD method. To simplify the notation, we denote \( \bfu_k \in \C^n\) ($k\geq 0$) as the true wave function evaluated at equally spaced spatial grid points at time $t_k = k\tau$. We consider the finite-dimensional system
\begin{equation}
\label{eq:sch_eq_CNFD}
    \imath \frac{\bfu_{k+1} - \bfu_k}{\tau} = \widehat{\bfA}\, \frac{\bfu_{k+1} + \bfu_k}{2} + \bfb_k, 
    \quad 
    k \ge 0,
\end{equation}
obtained by applying the CNFD discretization with periodic boundary condition to the original semiclassical \Sch\ equation, where the Hermitian matrix $\widehat{\bfA} \in \C^{n \times n}$ serves as the corresponding discretized \Sch\ operator and $\bfb_k$ collects truncation errors, external noise, and other perturbations.

The CN-DMD scheme is constructed by solving the Hermitian-constrained least-squares problem
\begin{equation}
\label{eq:training_residuals}
    \underset{\bfA=\bfA^*}{\min} \sum_{k=0}^{m-1} \bigl\| \bfl_k(\bfA) \bigr\|_2^2,
    \quad \text{where } 
    \bfl_k(\bfA) := \imath \frac{\bfu_{k+1}-\bfu_k}{\tau} - \bfA \frac{\bfu_{k+1}+\bfu_k}{2}.
\end{equation}
Specifically, $\bfl_k(\widehat{\bfA})$ represents the truncation error of the CNFD discretization at time~$t_k$. Define the approximation error as \(\bfe_k = \bfu_k - \bfx_k\) for \(k \ge 0\). Based on the training residuals, we derive a \emph{training error} bound over the given time horizon.

\begin{theorem}
For any \(0 \le k \le m-1\), we have
\begin{equation}\label{eq:posteriori_estimate}
    \bigl\|\bfe_{k+1}\bigr\|_2 \le \tau \sum_{i=0}^k \bigl\|\bfl_i(\bfA^{\rm{CN}})\bigr\|_2,
\end{equation}
where $\bfl_i(\bfA^{\rm{CN}})$, for $i \in \{0, 1, \dots, k\}$, denotes the training residuals~\eqref{eq:training_residuals} and $\bfA^{\rm{CN}}$ denotes the discretized \Sch\ operator arising from CN-DMD. 
\end{theorem}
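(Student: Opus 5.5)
We write $\bfA := \bfA^{\rm CN}$ and compare the CN-DMD iteration \eqref{eq:CN_DMD_iteration} with the residual identity satisfied by the true data. We take the initial value $\bfx_0 = \bfu_0$, so that $\bfe_0 = \bfzero$. By the definition of $\bfl_k$ in \eqref{eq:training_residuals}, the true snapshots satisfy exactly
\begin{equation*}
    \imath \frac{\bfu_{k+1}-\bfu_k}{\tau} = \bfA \frac{\bfu_{k+1}+\bfu_k}{2} + \bfl_k(\bfA),
\end{equation*}
while the CN-DMD iterates satisfy the same relation without the residual term,
\begin{equation*}
    \imath \frac{\bfx_{k+1}-\bfx_k}{\tau} = \bfA \frac{\bfx_{k+1}+\bfx_k}{2}.
\end{equation*}
Subtracting the two gives the error equation
\begin{equation*}
    \imath \frac{\bfe_{k+1}-\bfe_k}{\tau} = \bfA\frac{\bfe_{k+1}+\bfe_k}{2} + \bfl_k(\bfA).
\end{equation*}

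Next we solve this equation for $\bfe_{k+1}$. Rearranging yields $\bigl(\eye + \imath\frac{\tau}{2}\bfA\bigr)\bfe_{k+1} = \bigl(\eye - \imath\frac{\tau}{2}\bfA\bigr)\bfe_k - \imath\tau\,\bfl_k(\bfA)$. Since $\bfA$ is Hermitian, its eigenvalues are real, so $\eye + \imath\frac{\tau}{2}\bfA$ is invertible. Hence
\begin{equation*}
    \bfe_{k+1} = \bfL\bfe_k - \imath\tau\Bigl(\eye + \imath\tfrac{\tau}{2}\bfA\Bigr)^{-1}\bfl_k(\bfA),
\end{equation*}
where $\bfL$ is the evolution matrix \eqref{eq:CN_DMD_evolution_matrix}.

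The only step needing care is bounding the two operators in this recursion.
\begin{itemize}
\item By the proof of the preceding theorem, $\bfL$ is unitary, so $\|\bfL\bfe_k\|_2 = \|\bfe_k\|_2$.
\item $\eye + \imath\frac{\tau}{2}\bfA$ is normal and diagonalizes in the eigenbasis of $\bfA$ with eigenvalues $1 + \imath\frac{\tau}{2}\mu$, $\mu\in\R$. Each has modulus $\ge 1$, so the inverse has spectral norm at most $1$.
\end{itemize}
The triangle inequality then gives $\|\bfe_{k+1}\|_2 \le \|\bfe_k\|_2 + \tau\|\bfl_k(\bfA)\|_2$.

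Finally, telescoping from $\bfe_0=\bfzero$ gives \eqref{eq:posteriori_estimate} for every $0\le k\le m-1$. If $\bfx_0\neq\bfu_0$, the same argument produces an extra term $\|\bfe_0\|_2$ on the right-hand side. I would state the convention $\bfx_0=\bfu_0$ explicitly, since the theorem's bound has no such term.
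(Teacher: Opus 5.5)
Your proof is correct and follows the paper's argument: you subtract the CN-DMD iteration from the residual identity to get the error recursion, bound it using the unitary Cayley transform and $\bigl\|(\eye+\imath\tfrac{\tau}{2}\bfA)^{-1}\bigr\|_2\le 1$, and telescope from $\bfe_0=\bfzero$. You also justify the resolvent bound through the real spectrum of $\bfA$, and you state explicitly the convention $\bfx_0=\bfu_0$, which the paper only invokes at the end of its proof.
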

\begin{proof}
To simplify notations, we omit the superscript of $\bfA^{\rm{CN}}$. Subtracting
\begin{equation*}
    \imath \frac{\bfu_{j+1}-\bfu_j}{\tau} - \bfA \frac{\bfu_{j+1}+\bfu_j}{2} = \bfl_j(\bfA)
\end{equation*}
from
\begin{equation}\label{eq:CN_DMD_iteration_err_analysis}
    \imath \frac{\bfx_{j+1}-\bfx_j}{\tau} = \bfA\frac{\bfx_{j+1}+\bfx_j}{2},
\end{equation}
yields
\begin{equation*}
    \imath \frac{\bfe_{j+1}-\bfe_j}{\tau} - \bfA \frac{\bfe_{j+1}+\bfe_j}{2} = \bfl_j(\bfA) .
\end{equation*}
Equivalently,
\begin{equation*}
    \bfe_{j+1} = \left(\eye + \imath \frac{\tau}{2}\bfA\right)^{-1} \left(\eye - \imath \frac{\tau}{2}\bfA \right)\bfe_k
    - \imath \tau \left(\eye + \imath \frac{\tau}{2}\bfA\right)^{-1}\bfl_j(\bfA) .
\end{equation*}
Since \(\bfA=\bfA^*\), the Cayley transform \(
\bfU = \bigl(\eye + \imath \tfrac{\tau}{2}\bfA\bigr)^{-1} \bigl(\eye - \imath \tfrac{\tau}{2}\bfA\bigr)
\) is unitary and
\(\bigl\|(\eye+\imath \tfrac{\tau}{2}\bfA)^{-1}\bigr\|_2\le 1\). Hence
\[
\bigl\|\bfe_{j+1}\bigr\|_2 \le \bigl\|\bfe_j\bigr\|_2 + \tau \bigl\|\bfl_j(\bfA)\bigr\|_2,
\]
and summing from \(j=0\) to \(k\) with \(\bfx_0=\bfu_0\) (so \(\|\bfe_0\|_2=0\)) yields~\eqref{eq:posteriori_estimate}. 
\end{proof}

More generally, the error sequence \(\{\bfe_k\}\) satisfies the following estimation, which can be interpreted as an upper bound on \emph{prediction error}.

\begin{theorem}
For any \(k \ge 0\), we have
\begin{equation}
\label{eq:priori_inequality}
    \bigl\|\bfe_{k+1}\bigr\|_2 \le \bigl\|\bfe_0\bigr\|_2 + \tau \Bigl(\,\sum_{i=0}^k \bigl\|\bfb_i\bigr\|_2 + k \,\bigl\|\bfA^{\rm{CN}} - \widehat{\bfA} \bigr\|_2 \,\bigl\|\bfu_0\bigr\|_2 \Bigr),
\end{equation}
where $\bfA^{\rm{CN}} \in \C^{n \times n}$ and $\widehat{\bfA} \in \C^{n \times n}$ denote the discretized \Sch\ operators arising from CN-DMD and CNFD, respectively. The error terms $\bfb_i \in \C^n$ are defined in~\eqref{eq:sch_eq_CNFD}.
\end{theorem}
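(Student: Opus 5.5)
The plan is to repeat the argument of the training-error theorem above, with one change: the learned residual $\bfl_j(\bfA^{\rm CN})$ is replaced by an exact splitting of the defect into the CNFD perturbation $\bfb_j$ and the operator mismatch $\bfA^{\rm CN}-\widehat{\bfA}$. Write $\bfA=\bfA^{\rm CN}$. For each $j\ge 0$, subtract the CN-DMD iteration \eqref{eq:CN_DMD_iteration_err_analysis} from the CNFD relation \eqref{eq:sch_eq_CNFD}. Adding and subtracting $\bfA\,(\bfu_{j+1}+\bfu_j)/2$ should give the error equation
\begin{equation*}
    \imath \frac{\bfe_{j+1}-\bfe_j}{\tau} - \bfA \frac{\bfe_{j+1}+\bfe_j}{2} = \bigl(\widehat{\bfA}-\bfA\bigr)\frac{\bfu_{j+1}+\bfu_j}{2} + \bfb_j =: \bfd_j .
\end{equation*}
Solving for $\bfe_{j+1}$ then yields
\begin{equation*}
    \bfe_{j+1} = \bfU\,\bfe_j - \imath\tau\Bigl(\eye+\imath\frac{\tau}{2}\bfA\Bigr)^{-1}\bfd_j ,
\end{equation*}
where $\bfU$ is the Cayley transform of $\bfA$.

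Since $\bfA$ is Hermitian, the previous proof shows that $\bfU$ is unitary and that $\|(\eye+\imath\frac{\tau}{2}\bfA)^{-1}\|_2\le 1$. This is because the eigenvalues of $\eye+\imath\frac{\tau}{2}\bfA$ are $1+\imath\frac{\tau}{2}\lambda$ with $\lambda\in\R$, so each has modulus at least $1$. Taking norms gives the one-step recursion
\begin{equation*}
    \|\bfe_{j+1}\|_2 \le \|\bfe_j\|_2 + \tau\|\bfb_j\|_2 + \tau\,\bigl\|\bfA-\widehat{\bfA}\bigr\|_2\,\Bigl\|\frac{\bfu_{j+1}+\bfu_j}{2}\Bigr\|_2 .
\end{equation*}

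The remaining ingredient is a uniform bound $\|\bfu_j\|_2\le\|\bfu_0\|_2$. For this I will use that $\bfu_j$ is the true wave function sampled on the grid. By mass conservation \eqref{eq:mass_conservation} for the periodic problem \eqref{eq:schrodinger_equation}, together with the discrete mass conservation noted for the spectral discretization in Section~\ref{sec:preliminary}, we have $\|\bfu_j\|_2=\|\bfu_0\|_2$. The midpoint therefore satisfies $\|\bfu_{j+1/2}\|_2\le\|\bfu_0\|_2$ by the triangle inequality. Telescoping the recursion from $j=0$ to $k$ then gives
\begin{equation*}
    \|\bfe_{k+1}\|_2 \le \|\bfe_0\|_2 + \tau\sum_{j=0}^k\|\bfb_j\|_2 + \tau\,(k+1)\,\bigl\|\bfA-\widehat{\bfA}\bigr\|_2\,\|\bfu_0\|_2 .
\end{equation*}
Here $\|\bfe_0\|_2$ is kept general rather than set to zero, since prediction may start from a perturbed initial state.

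I expect two obstacles. The first is justifying $\|\bfu_j\|_2=\|\bfu_0\|_2$, since this is exact only up to the discretization error already placed in $\bfb_j$. If exact discrete conservation is not available, the fallback is to state the bound with $\max_j\|\bfu_j\|_2$ in place of $\|\bfu_0\|_2$. The second is the counting of steps. Summing $j=0,\dots,k$ produces $k+1$ mismatch terms, whereas \eqref{eq:priori_inequality} displays the factor $k$. I do not see a way to save one term: the $j=0$ term is nonzero unless $\bfu_1+\bfu_0=0$ or $\bfA=\widehat{\bfA}$, so the telescoped sum genuinely has $k+1$ contributions. The proof I have planned therefore establishes the estimate with the coefficient $k+1$. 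I read the factor $k$ in \eqref{eq:priori_inequality} as an indexing slip; if it is intended exactly, an additional argument would be needed, and I do not have one.
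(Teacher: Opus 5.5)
Everything up to your one-step recursion is fine. The step that fails is $\|\bfu_j\|_2=\|\bfu_0\|_2$. In this theorem $\bfu_j$ is the exact wave function sampled on the grid, and nothing in the hypotheses makes its discrete $\ell^2$ norm constant. Indeed, \eqref{eq:mass_conservation} conserves the $L^2$ integral, not the grid sum, and the remark in Section~\ref{sec:preliminary} concerns the time-splitting numerical solution and holds only with ``$\approx$''. From \eqref{eq:sch_eq_CNFD} itself you only get $\|\bfu_{j+1}\|_2\le\|\bfu_j\|_2+\tau\|\bfb_j\|_2$, which generates extra cross terms. Your fallback with $\max_j\|\bfu_j\|_2$ proves a different estimate.

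The missing idea is to split the defect the other way round:
\begin{equation*}
\widehat{\bfA}\,\frac{\bfu_{j+1}+\bfu_j}{2}-\bfA\,\frac{\bfx_{j+1}+\bfx_j}{2}
=\widehat{\bfA}\,\frac{\bfe_{j+1}+\bfe_j}{2}-\bigl(\bfA-\widehat{\bfA}\bigr)\frac{\bfx_{j+1}+\bfx_j}{2}.
\end{equation*}
The error is then propagated by the Cayley transform of $\widehat{\bfA}$, which is Hermitian by assumption, hence unitary with $\|(\eye+\imath\frac{\tau}{2}\widehat{\bfA})^{-1}\|_2\le 1$. The operator mismatch now acts on the CN-DMD iterates instead of the data. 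Their norm is exactly conserved by \eqref{eq:mass}, and with $\bfx_0=\bfu_0$ this gives $\|\bfx_j\|_2=\|\bfu_0\|_2$ with no assumption on the reference trajectory; this is the paper's proof. Its price is that it uses $\bfx_0=\bfu_0$, i.e.\ $\bfe_0=\bfzero$. For a general initial error it yields $\|\bfx_0\|_2\le\|\bfu_0\|_2+\|\bfe_0\|_2$ in place of $\|\bfu_0\|_2$. Your splitting, by contrast, would handle a nonzero $\bfe_0$ at no cost if a norm bound on $\bfu_j$ were available.

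On the step count, your reading is correct and no extra argument is needed. The paper's proof sums the one-step inequality over $j=0,\dots,k-1$. That bounds $\|\bfe_k\|_2$ (with the $\bfb$-sum ending at $k-1$ and coefficient $k$), not $\|\bfe_{k+1}\|_2$. Summing to $j=k$, as you do, gives the coefficient $k+1$, and the $j=0$ mismatch term cannot be dropped in general.
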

\begin{proof}
To simplify notations, we omit the superscript of $\bfA^{\rm{CN}}$. Subtracting \eqref{eq:sch_eq_CNFD} from \eqref{eq:CN_DMD_iteration_err_analysis} yields
\begin{equation*}
    \imath \frac{\bfe_{j+1} - \bfe_j}{\tau} = \widehat{\bfA} \frac{\bfe_{j+1} + \bfe_j}{2} - (\bfA - \widehat{\bfA}) \frac{\bfx_{j+1} + \bfx_j}{2} + \bfb_j.
\end{equation*}
Equivalently,
\begin{equation*}
\begin{aligned}
    \bfe_{j+1}
    &= \left(\eye + \imath \frac{\tau}{2} \widehat{\bfA} \right)^{-1} \left(\eye - \imath \frac{\tau}{2} \widehat{\bfA}\right)\bfe_j  \\
    &\quad +  \imath \tau \left(\eye + \imath \frac{\tau}{2} \widehat{\bfA} \right)^{-1} (\bfA - \widehat{\bfA}) \frac{\bfx_{j+1} + \bfx_j}{2}
    - \imath \tau \left(\eye + \imath \frac{\tau}{2} \widehat{\bfA} \right)^{-1} \bfb_j .
\end{aligned}
\end{equation*}
Since $\widehat{\bfA}=\widehat{\bfA}^*$ is Hermitian, the Cayley transform 
\(
\bfU = \bigl(\eye + \imath \tfrac{\tau}{2}\widehat{\bfA}\bigr)^{-1}\!\bigl(\eye - \imath \tfrac{\tau}{2}\widehat{\bfA}\bigr)
\)
is unitary and 
\(
\bigl\|\bigl(\eye + \imath \tfrac{\tau}{2}\widehat{\bfA}\bigr)^{-1}\bigr\|_2 \le 1
\).
Applying the triangle inequality, we obtain
\begin{equation*}
    \bigl\|\bfe_{j+1}\bigr\|_2 \le \bigl\|\bfe_j\bigr\|_2 + \frac{\tau}{2}\bigl\|\bfA-\widehat{\bfA}\bigr\|_2 \bigl(\bigl\|\bfx_{j+1}\bigr\|_2 + \bigl\|\bfx_j\bigr\|_2\bigr) + \tau \bigl\|\bfb_j\bigr\|_2 .
\end{equation*}
Because $\bfA=\bfA^*$, the CN update preserves the 2\,-norm, so \(\|\bfx_j\|_2=\|\bfx_0\|_2\). With the same initial data \(\bfx_0=\bfu_0\), this gives \(\|\bfx_j\|_2=\|\bfu_0\|_2\) and hence
\begin{equation*}
    \bigl\|\bfe_{j+1}\bigr\|_2 \le \bigl\|\bfe_j\bigr\|_2 + \tau \bigl\|\bfb_j\bigr\|_2 + \tau \bigl\|\bfA-\widehat{\bfA}\bigr\|_2\, \bigl\|\bfu_0\bigr\|_2 .
\end{equation*}
Summing for \(j=0\) to \(k-1\) yields \eqref{eq:priori_inequality}.
\end{proof}

\subsubsection{Model order reduction}
To further improve the efficiency of the iterative scheme \eqref{eq:CN_DMD_iteration}, we employ a reduced order model to construct a low-rank approximation 
\begin{equation}
\label{eq:CN_DMD_Hermite_matrix_reduced}
    \widetilde{\bfA} = \widetilde{\bfU}_{\bfX_1} \,\widetilde{\bfH}\, \widetilde{\bfU}_{\bfX_1}^*,
\end{equation}
to $\bfA$, where
\begin{equation}
\label{eq:CN_DMD_H_reduced}
    \widetilde{\bfU}_{\bfX_1} = \bfU_{\bfX_1}(:,\,1\!:\!r) \in \C^{n \times r},
    \qquad
    \widetilde{\bfH} = \bfH(1\!:\!r,\,1\!:\!r) \in \C^{r \times r}.
\end{equation}
The numerical rank $r$ of \(\bfX_1\) is defined as  
\begin{equation*}
    r=\max \left\{i: \sigma_i> \texttt{tol} \cdot \sigma_1\right\},
\end{equation*}
with a user-supplied tolerance \(\texttt{tol}\) (e.g., \(\texttt{tol} = 10^{-6}\)). This truncation is discussed in more detail in~\cite{drmacHermitianDynamicMode2024}. A simple justification for this truncation is provided in Lemma~\ref{lemma:truncation_reason}, which shows that such a truncation is reasonable in our setting.

\begin{lemma}
\label{lemma:truncation_reason}
We consider the data $\{\bfx_k\}_{k=0}^{m}$ generated by a unitary matrix
\begin{equation}
\label{eq:toy_model_data_in_lemma}
    \bfx_k = \sum_{i=1}^r b_i \boldsymbol{\phi}_i \lambda_i^k \in \C^n,
\end{equation}
where eigenvectors $\{\boldsymbol{\phi}_i\}_{i=1}^r$ is an othornormal set and eigenvalues $\{\lambda_i\}_{i=1}^r$ are mutually distinct and lie on the unit circle. Then the Hermitian matrix \(\bfH\)~\eqref{eq:CN_DMD_H} can be partitioned into the block form:
\begin{equation}
\label{eq:CN_DMD_H_block_form}
\bfH =
\begin{blockarray}{c@{\hskip 15pt}@{\hskip 6pt}cc}
    & r & n-r \\
    \begin{block}{r[@{\hskip 6pt}cc]}
        r   & \bfH(1\!:\!r, 1\!:\!r) & \bfzero \\
        n-r & \bfzero     & \bfzero \\
    \end{block}
\end{blockarray}.
\end{equation}
\end{lemma}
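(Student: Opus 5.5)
The plan is to compute the CN-DMD data matrices \eqref{eq:CN_DMD_data_matrix} explicitly for the data \eqref{eq:toy_model_data_in_lemma}, in the same way as in Remark~\ref{rmk:toy_model}. The goal is to show that every entry of $\bfC$ with row or column index larger than $r$ vanishes, and that $\sigma_i=0$ for $i>r$. The block structure \eqref{eq:CN_DMD_H_block_form} then follows by reading off the formula \eqref{eq:CN_DMD_H} case by case.

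\emph{Step 1: factorizations.} With $\bfPhi_r$, $\bfb_r$, $\bfLda_r$, $\bfZ_r$ as in Remark~\ref{rmk:toy_model}, I will write
\begin{equation*}
\bfX_1 = \bfPhi_r \,\tfrac12(\bfI+\bfLda_r)\,\diag(\bfb_r)\,\bfZ_r, \qquad
\bfX_2 = \bfPhi_r \,\tfrac{\imath}{\tau}(\bfLda_r-\bfI)\,\diag(\bfb_r)\,\bfZ_r .
\end{equation*}
I will impose the natural nondegeneracy assumptions implicit in the statement: $m\ge r$, $b_i\neq 0$, and $\lambda_i\neq -1$ for all $i$. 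Under these, the diagonal factor $\tfrac12(\bfI+\bfLda_r)\diag(\bfb_r)$ is invertible. Also, $\bfZ_r$ has full row rank $r$, since it is Vandermonde with distinct nodes. Hence $\operatorname{rank}\bfX_1=r$, $\operatorname{range}\bfX_1=\operatorname{span}\bfPhi_r$, and $\ker\bfX_1=\ker\bfZ_r$. In particular $\sigma_i=0$ for $i>r$. The first $r$ left singular vectors span $\operatorname{span}\bfPhi_r$, and the right singular vectors $\bfv_j$ with $j>r$ lie in $\ker\bfZ_r$.

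\emph{Step 2: vanishing of $\bfC$ outside the leading block.} Consider first $i>r$. The vector $\bfu_i$ is orthogonal to $\operatorname{span}\bfPhi_r\supseteq\operatorname{range}\bfX_2$, so row $i$ of $\bfU_{\bfX_1}^*\bfX_2\bfV_{\bfX_1}$ is zero. Now consider $r<j\le m$. Then $\bfv_j\in\ker\bfZ_r$, so $\bfX_2\bfv_j=\bfzero$ and column $j$ is zero. Columns $j>m$ are zero by the padding in the definition of $\bfC$. Thus $\bfC_{ij}=0$ whenever $i>r$ or $j>r$.

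\emph{Step 3: reading off $\bfH$.} If $i,j>r$, then $\sigma_i^2+\sigma_j^2=0$, so $\bfH_{ij}=0$ by definition. If $i\le r<j$, then $\bfH_{ij}=(\sigma_i\overline{\bfC_{ji}}+0\cdot\bfC_{ij})/\sigma_i^2=\overline{\bfC_{ji}}/\sigma_i=0$, because row $j>r$ of $\bfC$ vanishes. The case $j\le r<i$ follows by Hermitian symmetry. This gives \eqref{eq:CN_DMD_H_block_form}.

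The only delicate point is Step 1, namely identifying $\operatorname{range}\bfX_1$ and $\ker\bfX_1$ exactly. This is where the nondegeneracy assumptions are needed. If some $\lambda_i=-1$ or $b_i=0$, the rank of $\bfX_1$ drops below $r$. In that case I would instead take $r$ to be the actual rank of $\bfX_1$ and rerun the same argument on the surviving modes. In particular, a mode with $\lambda_i=-1$ would contribute to $\bfX_2$ but not to $\bfX_1$, and would have to be treated separately.
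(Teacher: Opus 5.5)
Your proof is correct and follows essentially the same route as the paper. Both factor $\bfX_1,\bfX_2$ through $\bfPhi_r$, use that the trailing left singular vectors are orthogonal to $\operatorname{span}\bfPhi_r\supseteq\operatorname{range}\bfX_2$ to kill the rows of $\bfC$ beyond $r$, and then read off $\bfH$ entrywise from $\sigma_i=0$ for $i>r$; your column-vanishing step is harmless but not needed. Your nondegeneracy hypotheses ($m\ge r$, $b_i\neq 0$, $\lambda_i\neq -1$) are the ones actually required for $\operatorname{rank}\bfX_1=r$, whereas the paper's assumption $\lambda_i\neq 1$ only serves the equality $\operatorname{range}\bfX_2=\operatorname{span}\bfPhi_r$, of which you correctly use just the inclusion.
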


\begin{proof}
 With the expression \eqref{eq:toy_model_data_in_lemma}, the data matrix $\bfX_1$ and $\bfX_2$ \eqref{eq:CN_DMD_data_matrix} can be reformulated as 
\begin{equation*}
    \bfX_1 = \frac{1}{2}\bigl(\bfPhi_r (\bfLda_r + \eye)\, \diag(\bfb_r) \bfZ_r \bigr) , \quad
    \bfX_2 = \frac{\imath}{2} \bigl(\bfPhi_r (\bfLda_r - \eye)\, \diag(\bfb_r) \bfZ_r \bigr),
\end{equation*}
where variables $\bfPhi_r$, $\bfLda_r$, $\bfb_r$ and $\bfZ_r$ are defined in Remark~\ref{rmk:toy_model}. Without loss of generality, we assume that eigenvalues $\lambda_1 \sim \lambda_r$ are all different from one. Then we have
\begin{equation*}
    \Imag(\bfX_1) = \Imag(\bfX_2) = \Imag(\bfPhi_r) = \Imag(\widetilde{\bfU}_{\bfX_1})
\end{equation*}
where $\widetilde{\bfU}_{\bfX_1} \in \C^{n \times r}$ \eqref{eq:CN_DMD_H_reduced} is the truncated left singular vector matrix of $\bfX_1$. Thus, 
\begin{equation*}
    \bfC = \begin{bmatrix}
        \widetilde{\bfU}_{\bfX_1}^* \\
        \widetilde{\bfU}_{\bfX_1}^{\bot *}
    \end{bmatrix} \bfX_2 \bfV_{\bfX_1}
         = \begin{bmatrix}
        \widetilde{\bfU}_{\bfX_1}^* \bfX_2 \bfV_{\bfX_1} \\
        \bfzero
    \end{bmatrix}.
\end{equation*}
Substitute into the exact expression \eqref{eq:CN_DMD_H} and combine with the Hermitian property of \(\bfH\), it is noticed that $\bfH$ can be partitioned into the block form~\eqref{eq:CN_DMD_H_block_form}.
\end{proof}

Furthermore, we consider the eigendecomposition of the truncated Hermitian matrix 
\begin{equation*}
    \widetilde{\bfH} = \bfW_{\widetilde{\bfH}} \boldsymbol{\Lambda}_{\widetilde{\bfH}} \bfW_{\widetilde{\bfH}}^*,
    \quad 
    \bigl(\bfW_{\widetilde{\bfH}}, \, \boldsymbol{\Lambda}_{\widetilde{\bfH}} \in 
    \C^{r \times r}\bigr).
\end{equation*}
Substituting this expression into~\eqref{eq:CN_DMD_Hermite_matrix_reduced} yields the reduced eigendecomposition
\begin{equation*}
    \widetilde{\bfA} = \widetilde{\bfU}_{\bfX_1} \bigl( \bfW_{\widetilde{\bfH}} \boldsymbol{\Lambda}_{\widetilde{\bfH}} \bfW_{\widetilde{\bfH}}^* \bigr) \widetilde{\bfU}_{\bfX_1}^*
    = \bigl(\widetilde{\bfU}_{\bfX_1} \bfW_{\widetilde{\bfH}}\big) \boldsymbol{\Lambda}_{\widetilde{\bfH}} \bigl(\widetilde{\bfU}_{\bfX_1} \bfW_{\widetilde{\bfH}}\bigr)^*.
\end{equation*}
We define \(\bfU_{\widetilde{\bfA}} = \widetilde{\bfU}_{\bfX_1}\bfW_{\widetilde{\bfH}} \in \C^{n \times r}\) and further denote
\(
\bfW_{\widetilde{\bfA}} = \bigl[\bfU_{\widetilde{\bfA}},\,\bfU_{\widetilde{\bfA}}^{\perp}\bigr] \in \C^{n \times n}
\)
as the full eigenvector matrix of \(\widetilde{\bfA}\), where the columns of \(\bfU_{\widetilde{\bfA}}^{\perp} \in \C^{n \times (n-r)}\) form an orthonormal basis for the orthogonal complement of \(\bfU_{\widetilde{\bfA}}\). In particular, it satisfies
\begin{equation}
\label{eq:complement_U_A_identity}
    \bfU_{\widetilde{\bfA}} \bfU_{\widetilde{\bfA}}^* + 
    \bfU_{\widetilde{\bfA}}^{\perp} \bfU_{\widetilde{\bfA}}^{\perp *} = \eye.
\end{equation}
Consequently, the full eigendecomposition of \(\widetilde{\bfA}\) is given by
\begin{equation}
\label{eq:CN_DMD_Hermite_matrix_reduced_eig}
    \widetilde{\bfA} = \bfW_{\widetilde{\bfA}} \boldsymbol{\Lambda}_{\widetilde{\bfA}}
    \bfW_{\widetilde{\bfA}}^*,
    \quad 
    \boldsymbol{\Lambda}_{\widetilde{\bfA}} = \begin{bmatrix}
        \boldsymbol{\Lambda}_{\widetilde{\bfH}} &           \\
                                            & \bfzero
    \end{bmatrix}.
\end{equation}
By approximating $\bfA$ by $\widetilde{\bfA}$ in the original recurrence~\eqref{eq:CN_DMD_iteration}, the iteration can be rewritten, for any nonnegative integer $k$, as
\begin{equation*}
    \begin{aligned}
    \bfx_k &= \bfW_{\widetilde{\bfA}} \bigl(\eye + \imath \frac{\tau}{2} \boldsymbol{\Lambda}_{\widetilde{\bfA}} \bigr)^{-k}
    \bigl(\eye - \imath\frac{\tau}{2} \boldsymbol{\Lambda}_{\widetilde{\bfA}}\bigr)^k \bfW_{\widetilde{\bfA}}^* \bfx_0, \\
    &= \bfW_{\widetilde{\bfA}} \, \bfD_{\widetilde{\bfA}}^k \, \bfW_{\widetilde{\bfA}}^* \, \bfx_0,
    \end{aligned}
\end{equation*}  
where the diagonal matrix \(\bfD_{\widetilde{\bfA}}\) is given by 
\begin{align}
    & \bfD_{\widetilde{\bfA}} = \bigl(\eye + \imath \frac{\tau}{2} \boldsymbol{\Lambda}_{\widetilde{\bfA}} \bigr)^{-1} \bigl(\eye - \imath \frac{\tau}{2} \boldsymbol{\Lambda}_{\widetilde{\bfA}} \bigr) 
    = \begin{bmatrix}
    \bfD_{\widetilde{\bfH}} & \\
    & \eye 
    \end{bmatrix}, \\
\label{eq:CN_DMD_matrix_eigmat_reduced}
    & \qquad \bfD_{\widetilde{\bfH}} = \bigl(\eye + \imath \frac{\tau}{2} \boldsymbol{\Lambda}_{\widetilde{\bfH}} \bigr)^{-1} \bigl(\eye - \imath \frac{\tau}{2} \boldsymbol{\Lambda}_{\widetilde{\bfH}} \bigr).
\end{align}
Consequently, the iteration~\eqref{eq:CN_DMD_iteration} can be written in a form only dependent on $\boldsymbol{\Lambda}_{\widetilde{\bfH}}$ and $\bfU_{\widetilde{\bfA}}$, for any nonnegative integer $k$, as
\begin{equation*}
    \begin{aligned}
        \bfx_k &= \begin{bmatrix}
            \bfU_{\widetilde{\bfA}} & \bfU_{\widetilde{\bfA}}^{\bot}  
        \end{bmatrix}
        \begin{bmatrix}
            \bfD_{\widetilde{\bfH}}^k & \\
                  & \eye
        \end{bmatrix}
        \begin{bmatrix}
            \bfU_{\widetilde{\bfA}}^* \\ \bfU_{\widetilde{\bfA}}^{\bot *}
        \end{bmatrix} \bfx_0 \\
        &= \bfU_{\widetilde{\bfA}}   \,
           \bfD_{\widetilde{\bfH}}^k \,
           \bfU_{\widetilde{\bfA}}^* \, \bfx_0 + \bfU_{\widetilde{\bfA}}^{\bot} 
        \bfU_{\widetilde{\bfA}}^{\bot *} \bfx_0 \\
        &= \bfU_{\widetilde{\bfA}}   \,
           \bfD_{\widetilde{\bfH}}^k \,
           \bfU_{\widetilde{\bfA}}^* \,\bfx_0 + \left(\eye -\bfU_{\widetilde{\bfA}} \bfU_{\widetilde{\bfA}}^*\right) \bfx_0,
    \end{aligned}
\end{equation*}
where the identity \eqref{eq:complement_U_A_identity} is used in the last equality. It means that we can accomplish the prediction with computing the reduced eigendecomposition of \(\widetilde{\bfA}\), which is beneficial for reducing computational and storage costs. The following Algorithm \ref{algo:CN_DMD} describes the overall procedure of CN-DMD for the single time prediction.

\begin{algorithm}[htbp]
\caption{The Crank--Nicolson DMD (single time prediction)}
\label{algo:CN_DMD}
\begin{algorithmic}[1]
    \State {\bf Input:} Initial state $\bfx_0$, data matrices $\bfX_1,\,\bfX_2 \in \C^{n \times m}$ from \eqref{eq:CN_DMD_data_matrix}, tolerance $\texttt{tol}$ for truncated SVD, length $N$ of target horizon (with time step $\tau$).
    \State $[\bfU_{\bfX_1}, \boldsymbol{\Sigma}_{\bfX_1}, \bfV_{\bfX_1}] \gets \svd(\bfX_1, \texttt{tol})$  \hfill \% Truncated SVD with tolerance $\texttt{tol}$.
    \State Construct the Hermitian matrix $\widetilde{\bfH} \in \C^{r \times r}$ as in \eqref{eq:CN_DMD_H} and \eqref{eq:CN_DMD_H_reduced}.
    \State $[\bfW_{\widetilde{\bfH}},\, \boldsymbol{\Lambda}_{\widetilde{\bfH}}] \gets \eig(\widetilde{\bfH})$ \hfill \% Eigendecomposition of $\widetilde{\bfH}$.
    \State $\bfU_{\widetilde{\bfA}} \gets \widetilde{\bfU}_{\bfX_1} \bfW_{\widetilde{\bfH}}$  \hfill \% Lift reduced eigenvectors to the full space.
    \State We denote $\boldsymbol{\Lambda}_{\widetilde{\bfH}} = \diag(\lambda_1, \dots, \lambda_r)$, then 
    \begin{equation*}
        \bfD_{\widetilde{\bfH}} \gets \diag\biggl(\dfrac{1-\imath \tfrac{\tau}{2}\lambda_j}{\,1+\imath \tfrac{\tau}{2}\lambda_j}\biggr)_{j=1}^r
    \end{equation*}
    \hfill \% Crank--Nicolson spectral factors.
    \State Compute the predicted state $\bfx_N$ at time $N \tau$:
    \begin{equation*}
       \bfx_N = \bfU_{\widetilde{\bfA}} \, \bfD_{\widetilde{\bfH}}^N \, \bfU_{\widetilde{\bfA}}^* \, \bfx_0 + (\eye -\bfU_{\widetilde{\bfA}} \bfU_{\widetilde{\bfA}}^*) \bfx_0.     
    \end{equation*}
    \State {\bf Output:} Predicted state $\bfx_N$ at time $N \tau$.
\end{algorithmic}
\end{algorithm}

\subsection{The Semi-implicit DMD}
\subsubsection{The method}
The semi-implicit DMD (SI-DMD) differs from CN-DMD mainly in the choice of the finite difference scheme used to approximate the time derivative. The subsequent derivation of the iteration scheme and the principle of model order reduction follow in an  analogous manner.

We construct the augmented data matrices
\begin{equation}
    \label{eq:SI_DMD_data_matrix}
    \bfX_1 = \bigl[\hat{\bfx}_{1}, \cdots, \hat{\bfx}_{m}\bigr], \quad 
    \bfX_2 = \bigl[\imath \delta_t \bfx_1, \cdots, \imath \delta_t \bfx_m\bigr],
\end{equation}  
where 
\begin{equation*}
    \hat{\bfx}_k=\frac{\bfx_{k+1} + \bfx_{k-1}}{2}, \quad
    \delta_t \bfx_k = \frac{\bfx_{k+1} - \bfx_{k-1}}{2\tau}, \quad k=1,\cdots, m.
\end{equation*} 
Here $\delta_t$ denotes the second-order central finite difference approximation of the temporal derivative $\partial_t$. The following operations are similar to CN-DMD. We construct the approximated \Sch\ operator \(\bfA^{\text{SI}}\) by solving the Hermitian Procrustes problem
\begin{equation}
\label{eq:Hermitian_Procrustes_problem_SI}
    \bfA^{\text{SI}} \in \underset{\bfA = \bfA^*}{\operatorname{argmin}}\|\bfX_2 - \bfA \bfX_1\|_{\rmF},
\end{equation}
The minimum Frobenius norm solution is the same as~\eqref{eq:CN_DMD_H} in the form. We omit the superscript of $\bfA^{\text{SI}}$ for brevity. Further, \eqref{eq:SI_DMD_data_matrix} and \eqref{eq:Hermitian_Procrustes_problem_SI} yield the relation: for $1 \leq k \leq m-1$,
\begin{equation*}
    \imath\frac{\bfx_{k+1} - \bfx_{k-1}}{2\tau} \approx 
    \bfA \frac{\bfx_{k+1} + \bfx_{k-1}}{2}. 
\end{equation*}
By a simple transformation, this leads to a recurrence of the semi-implicit type,  
\begin{equation}
\label{eq:SI_DMD_recurrence}
    \bfx_{k+1} = \left(\eye + \imath \tau \bfA \right)^{-1}(\eye - \imath \tau \bfA) \bfx_{k-1}.
\end{equation}
This formula is served as the main iterative formula of our SI-DMD algorithm. It is unconditionally stable due to its conservation of discretized mass and energy.

\begin{theorem}
    The SI-DMD preserves the discretized mass, i.e. for any nonnegative integer $k$, 
    \begin{equation*}
        \label{eq:SI_mass_preserving}
        \left\|\bfx_{2k}\right\|_{2} = \left\|\bfx_0\right\|_{2}, \quad 
        \left\|\bfx_{2k+1}\right\|_{2} = \left\|\bfx_1\right\|_{2}.
    \end{equation*}
    In addition, it preserves the discretized quadratic energy functional 
    \(
        E(\bfx) = \bfx^* \bfA \bfx
    \)
    associated with the Hermitian operator $\bfA$, i.e., for any nonnegative integer $k$,
    \begin{equation*}
        E(\bfx_{2k}) = E(\bfx_0), \quad E(\bfx_{2k+1}) = E(\bfx_1).
    \end{equation*}
\end{theorem}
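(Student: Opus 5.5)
The plan is to mirror the proof of the CN-DMD conservation theorem, treating the two-step recurrence \eqref{eq:SI_DMD_recurrence} as a one-step map on even and odd indices separately. Define
\[
\bfL_{\mathrm{SI}} := \left(\eye + \imath \tau \bfA\right)^{-1}\left(\eye - \imath \tau \bfA\right),
\]
so that $\bfx_{k+1} = \bfL_{\mathrm{SI}}\bfx_{k-1}$. By induction on $k$, this gives $\bfx_{2k} = \bfL_{\mathrm{SI}}^k \bfx_0$ and $\bfx_{2k+1} = \bfL_{\mathrm{SI}}^k \bfx_1$. Both statements of the theorem therefore reduce to two facts about the single matrix $\bfL_{\mathrm{SI}}$: it is unitary, and it commutes with $\bfA$.

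First, $\eye + \imath\tau\bfA$ is invertible. Since $\bfA = \bfA^*$, its eigenvalues $\mu$ are real, so the eigenvalues $1 + \imath\tau\mu$ are nonzero. Next, $\bfL_{\mathrm{SI}}$ is the Cayley transform of the Hermitian matrix $\tau\bfA$ and is unitary. One way to see this is to diagonalize $\bfA = \bfW\bfLda\bfW^*$ with $\bfW$ unitary. Then $\bfL_{\mathrm{SI}} = \bfW\,\diag\bigl((1-\imath\tau\mu_j)/(1+\imath\tau\mu_j)\bigr)\bfW^*$, and each diagonal entry has modulus one. Alternatively, one can note that $\eye \pm \imath\tau\bfA$ commute and that $(\eye - \imath\tau\bfA)^* = \eye + \imath\tau\bfA$, which gives $\bfL_{\mathrm{SI}}^*\bfL_{\mathrm{SI}} = \eye$ directly. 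This is the same argument already used for the CN-DMD evolution matrix \eqref{eq:CN_DMD_evolution_matrix}, with $\tau/2$ replaced by $\tau$. Unitarity yields $\|\bfx_{2k}\|_2 = \|\bfx_0\|_2$ and $\|\bfx_{2k+1}\|_2 = \|\bfx_1\|_2$.

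For the energy, $\bfL_{\mathrm{SI}}$ is a rational function of $\bfA$, which is also visible from the diagonalization, so $\bfA\bfL_{\mathrm{SI}} = \bfL_{\mathrm{SI}}\bfA$. Repeating the computation \eqref{eq:CN_energy_preserving_iteration}, we get
\[
E(\bfx_{k+1}) = \bfx_{k-1}^*\bfL_{\mathrm{SI}}^*\bfA\bfL_{\mathrm{SI}}\bfx_{k-1} = \bfx_{k-1}^*\bfA\,\bfL_{\mathrm{SI}}^*\bfL_{\mathrm{SI}}\bfx_{k-1} = E(\bfx_{k-1}).
\]
Induction separately over even and odd indices then gives $E(\bfx_{2k}) = E(\bfx_0)$ and $E(\bfx_{2k+1}) = E(\bfx_1)$.

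There is no real obstacle here. The one point needing care is that the recurrence couples $\bfx_{k+1}$ to $\bfx_{k-1}$ and not to $\bfx_k$. As a result, conservation holds only within each parity class, and the starting values $\bfx_0$ and $\bfx_1$ are independent data. The statement accounts for this by conserving the even and odd subsequences separately.
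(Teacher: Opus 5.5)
Your proposal is correct and follows essentially the same route as the paper. Mass conservation comes from the unitarity of the Cayley transform $\bfL = (\eye + \imath\tau\bfA)^{-1}(\eye - \imath\tau\bfA)$, applied separately to the even and odd subsequences, and energy conservation repeats the CN-DMD computation using that $\bfL$ commutes with $\bfA$; you simply make explicit the details the paper leaves implicit (invertibility of $\eye + \imath\tau\bfA$, the unitarity check, and the parity-wise induction).
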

\begin{proof}
    The mass-conserving property \eqref{eq:SI_mass_preserving} follows from the unitarity of the evolution matrix 
    \begin{equation}
    \label{eq:SI_DMD_evolution_matrix}
        \bfL = \left(\eye + \imath \tau \bfA \right)^{-1}\left(\eye - \imath \tau \bfA \right).
    \end{equation}
    The energy-conserving property is established through the similar relation as \eqref{eq:CN_energy_preserving_iteration}.
\end{proof}

The SI-DMD method admits an error estimate analogous to that of CN-DMD in Subsection~\ref{subsec:error_analysis}. The only difference is that the estimates are carried out separately for even and odd time steps; all other arguments remain unchanged. For brevity, we omit the details.

\subsubsection{Model order reduction}
We consider the same model order reduction technique as \eqref{eq:CN_DMD_Hermite_matrix_reduced} and \eqref{eq:CN_DMD_H_reduced}, as SI-DMD shares a similar reason for truncation as given in Lemma~\ref{lemma:truncation_reason}. For consistency and convenience in writing, we omit the decomposition \eqref{eq:CN_DMD_Hermite_matrix_reduced}, \eqref{eq:CN_DMD_H_reduced} and \eqref{eq:CN_DMD_Hermite_matrix_reduced_eig}. It leads to a computationally friendly iteration, for any nonnegative integer $k$,
\begin{equation}
    \label{eq:SI_DMD_recurrence_reduced}
    \begin{aligned}
    \bfx_{2k} &= \bfU_{\widetilde{\bfA}} \, \bfD_{\widetilde{\bfH}}^k \, \bfU_{\widetilde{\bfA}}^* \bfx_0 + \left(\eye -\bfU_{\widetilde{\bfA}} \bfU_{\widetilde{\bfA}}^*\right) \bfx_0, \\
    \bfx_{2k+1} &= \bfU_{\widetilde{\bfA}} \, \bfD_{\widetilde{\bfH}}^k \,  \bfU_{\widetilde{\bfA}}^* \bfx_1 + \left(\eye -\bfU_{\widetilde{\bfA}} \bfU_{\widetilde{\bfA}}^*\right) \bfx_1,
    \end{aligned}
\end{equation}    
where $\bfU_{\widetilde{\bfA}} = \widetilde{\bfU}_{\bfX_1}\bfW_{\widetilde{\bfH}}$ is set as the reduced eigenvector matrix of $\widetilde{\bfA}$, and the diagonal matrix is defined as 
\begin{equation*}
    \bfD_{\widetilde{\bfH}} = \diag\Bigl(\frac{1-\imath \tau \lambda_j}{1+\imath \tau \lambda_j}\Bigr)_{j=1}^r.
\end{equation*}
Algorithm \ref{algo:SI_DMD} describes the overall procedure of SI-DMD for a single time prediction.

\begin{algorithm}[!tb]
\caption{The Semi-implicit DMD (single time prediction)}
\label{algo:SI_DMD}
\begin{algorithmic}[1]
    \State \textbf{Input:} Initial states $\bfx_0,\, \bfx_1$, data matrices $\bfX_1,\, \bfX_2 \in \C^{n \times m}$ constructed by~\eqref{eq:SI_DMD_data_matrix}, tolerance $\texttt{tol}$ for truncated SVD, time step $\tau$, target index $N$ (predict at time $N \tau$).
    \State $[\bfU_{\bfX_1}, \boldsymbol{\Sigma}_{\bfX_1}, \bfV_{\bfX_1}] \gets \svd(\bfX_1, \texttt{tol})$ \hfill \% truncated SVD with tolerance $\texttt{tol}$
    \State Construct the Hermitian $\widetilde{\bfH} \in \C^{r\times r}$ as in \eqref{eq:CN_DMD_H} and \eqref{eq:CN_DMD_H_reduced}
    \State $[\bfW_{\widetilde{\bfH}},\boldsymbol{\Lambda}_{\widetilde{\bfH}}] \gets \eig(\widetilde{\bfH})$
    \State $\bfU_{\widetilde{\bfA}} \gets \bfU_{\bfX_1} \bfW_{\widetilde{\bfH}}$ \hfill \% lift reduced eigenvectors to the full space
    \State We denote $\boldsymbol{\Lambda}_{\widetilde{\bfH}} = \diag(\lambda_1, \dots, \lambda_r)$, then \begin{equation*}
        \bfD_{\widetilde{\bfH}} \gets  \diag\Bigl(\frac{1-\imath \tau \lambda_j}{1+\imath \tau \lambda_j}\Bigr)_{j=1}^r
    \end{equation*}
    \State $\bfz_0 \gets \bfU_{\widetilde{\bfA}}^*
    \, \bfx_0$, \quad $\bfx_0^{\,\perp} \gets \bfx_{0} - \bfU_{\widetilde{\bfA}} \bfz_0$
    \State $\bfz_1 \gets \bfU_{\widetilde{\bfA}}^* \, \bfx^{1}$, \quad $\bfx_{1}^{\,\perp} \gets \bfx_{1} - \bfU_{\widetilde{\bfA}} \bfz_1$
    \If{$N$ is even} \Comment{$N=2M$}
        \State $M \gets N/2$; \quad $\bfz_N \gets \bfD_{\widetilde{\bfH}}^{\,M}\, \bfz_0$ \hfill \% elementwise power on the diagonal
        \State $\bfx_{N} \gets \bfU_{\widetilde{\bfA}} \bfz_N + \bfx_{0}^{\,\perp}$
    \Else \Comment{$N=2M+1$}
        \State $M \gets (N-1)/2$;\quad $\bfz_N \gets \bfD_{\widetilde{\bfH}}^{\,M} \bfz_1$ \hfill \% elementwise power on the diagonal
        \State $\bfx_{N} \gets \bfU_{\widetilde{\bfA}} \bfz_N + \bfx_{1}^{\,\perp}$
    \EndIf
    \State \textbf{Output:} Predicted state $\bfx_N$ at time $N \tau$.
\end{algorithmic}
\end{algorithm}

\subsection{Parallelism of the methods}

In this subsection, we give some practical suggestions for parallelism according to the well-formed iteration formulas~\eqref{eq:CN_DMD_iteration} and~\eqref{eq:SI_DMD_recurrence_reduced}.

\paragraph{Parallel prediction at all intermediate times}
First, We notice that the prediction at different time can be computed independently. Let $\bfd=\diag(\bfD_{\widetilde{\bfH}})\in\C^{r}$ be the diagonal of $\bfD_{\widetilde{\bfH}}$. Let $\bfz_0 = \bfU_{\widetilde{\bfA}}^* \bfx_0 \in \C^r$ and $\bfx_0^{\perp} = \bfx_{0} - \bfU_{\widetilde{\bfA}} \bfz_0 \in \C^r$. 
For each $k \in\{1,\dots,N\}$,
\begin{equation*}
  \bfx_{k} = \bfU_{\widetilde{\bfA}} \,\big(\bfd^k\odot \bfz_0\big) + \bfx_0^{\perp},
\end{equation*}
where $\odot$ denotes the Hadamard (elementwise) product, and $\bfd=[d_1,\dots,d_r]\trans \in \C^r$, where $\bfd^k$ denotes the elementwise $k$-th power of $\bfd$. Since each $k$ uses only elementwise powers of $\bfd$ and a single lift by $\bfU_{\widetilde{\bfA}}$, $\{\bfx_k\}_{k=0}^N$ can be computed efficiently in parallel over~$k$.

\paragraph{Block prediction at all intermediate times}
Another efficient approach is to stack the reduced states $\{\bfz_k\}_{k=1}^N$, where $\bfz_k = \bfd^{k}\odot \bfz_0$, into a matrix 
\begin{equation*}
   \bfZ=\bigl[\bfz_1, \cdots, \bfz_N \bigr]\in\C^{r \times N}.
\end{equation*}
Then all lifted states can be obtained with few matrix-matrix multiplications:
\begin{equation*}
   \bigl[\bfx_1, \cdots, \bfx_N\bigr]
   = \bfU_{\widetilde{\bfA}} \bfZ + \bigl[\bfx_0^\perp,\cdots,\bfx_0^\perp \bigr] \in \C^{n\times N}.
\end{equation*}
This block formulation avoids explicit parallelization over $k$ and exploits highly optimized BLAS-3 matrix-matrix multiplication for throughput.
\medskip


\begin{remark} For a large $k$, use $\bfd^k = \exp\!\big(k\log \bfd\big)$ (elementwise) to improve stability and efficiency.
\end{remark}

\subsection{Comparison of the complexity}

We assume that the data matrices \(\bfX_1\), \(\bfX_2 \in \C^{n \times m}\) with \(m \ll n\), and the numerical rank\footnote{The numerical rank is defined as the number of the truncated singular values.} of \(\bfX_1\) is \(r\). The length of the prediction trajectory is set as $N$.

We compare various DMD-type algorithms in terms of their computational costs for prediction, as well as the associated storage costs of the evolution matrix. Table \ref{tab:predict_time_comparation} shows the computational and storage costs of different algorithms. The dominant computational cost \(\mathcal{O}(n m^2)\) in Crank-Nicolson DMD (CN-DMD) and semi-implicit DMD (SI-DMD), which arises from the full SVD of \(\bfX_1\), can be reduced to \(\mathcal{O}(n m r)\) by employing a truncated SVD. As shown in Table \ref{tab:predict_time_comparation}, the costs of CN/SI-DMD and classical DMD are of the same order of magnitude, but our methods preserve mass conservation and exhibit greater robustness to noise.

\begin{table}[H]
\footnotesize
\caption{Prediction-time and storage complexity: CN/SI-DMD vs. classical DMD vs. direct piDMD.}
\label{tab:predict_time_comparation}
\begin{center}
\begin{tabular}{lcccc}
\toprule
 & CN/SI-DMD & classical DMD & direct piDMD \\
\midrule
single target $\bfx_N$ 
& $\mathcal{O}(nmr)$ 
& $\mathcal{O}(nmr)$ 
& $\mathcal{O}(n^3 + n^2N)$ \\
all targets $\bfx_{1:N}$ 
& $\mathcal{O}(nmr + n r N)$ 
& $\mathcal{O}(nmr + n r N)$ 
& $\mathcal{O}(n^3 + n^2N)$ \\
storage
& $\mathcal{O}(nr)$ 
& $\mathcal{O}(nr)$
& $\mathcal{O}(n^2)$ \\
\bottomrule
\end{tabular}
\end{center}
\end{table}


\section{Numerical examples}
\label{sec:numerical_examples}
In this section, we illustrate through several numerical experiments the model order reduction properties, computational efficiency, and robustness to noise of the proposed Crank--Nicolson DMD and semi-implicit DMD. To examine the intrinsic properties of the models and to ensure a fair comparison, we compare our methods with classical DMD and piDMD directly on the wave function data, without any preprocessing. The training data in all experiments are generated using the Strang splitting spectral method~\cite{baoTimeSplittingSpectralApproximations2002}.


\paragraph{Computational environment}
The algorithms were implemented and tested in MATLAB R2023b. The reported runtimes are measured using MATLAB's \texttt{tic}/\texttt{toc} functions.

\subsection{Forward wave propagation} In this experiment, we show that our algorithms can simulate the forward propagation of the wave in both space and time. 
\label{subsec:propagation}
We consider the semiclassical \Sch\ equation~\eqref{eq:schrodinger_equation} with the initial condition of a WKB form~\cite{baoTimeSplittingSpectralApproximations2002},
\begin{equation}
\label{eq:propagation_forward_initial_condition}
u^{\varepsilon}(x,0)
    = u_0^{\varepsilon}(x)
    = \sqrt{n_0(x)}\, e^{\imath S_0(x)/\varepsilon},
\end{equation}
where
\begin{equation}
\label{eq:propagation_forward_initial_condition_explicit}
n_0(x) = \bigl(e^{-25(x - \frac{a+b}{2})^2}\bigr)^2,
\qquad
S_0(x) = -\frac{1}{50}(x-a)(x-b),
\qquad x \in \R.
\end{equation}
We further choose the spatial domain \([a,b] = [0,2]\), the constant potential \(V(x)\equiv 10\), and the scaled Planck constant \(\varepsilon = 10^{-2}\). 


We set both the time-step size \(\tau\) and the spatial mesh size \(h\) to be \(10^{-2}\). Then we construct the data matrix $\bfX = [\bfu_1, \bfu_2, \dots, \bfu_{99}, \bfu_{100}] \in \C^{200 \times 100}$, where $\bfu_k:= [u^\varepsilon(x_1, t_k), u^\varepsilon(x_2, t_k), \cdots, u^\varepsilon(x_{200}, t_k)]\trans$, with $x_j = jh$, $t_k = k\tau$ ($j=1, 2, \cdots, 200$, $k\geq 0$), denoting the spatial and temporal grids, respectively. 
The data are generated using the time-splitting method with the same temporal and spatial mesh sizes $\tau_e = h_e = 10^{-2}$.  We evaluate the performance of different methods over a time horizon eight times longer than that of the given data, with the results shown in Figure~\ref{fig:forward_propagation}. 

Figure~\ref{fig:forward_propagation} compares the forward propagation obtained by several DMD-type methods with the true propagation, where all displayed values correspond to the absolute values of the complex-valued wave function. All methods can simulate the spatio-temporal forward propagation in general, but the three methods with model order reduction outperform piDMD, as piDMD suffers from significant prediction errors. Analysis of the toy model in Remark~\ref{rmk:toy_model} suggests that clustered singular values lead to inaccurate singular vector matrices $\boldsymbol{\Phi}_r$ and their orthogonal complement $\boldsymbol{\Phi}_r^{\bot}$. Consequently, the additional term $\boldsymbol{\Phi}_r^{\bot}(\boldsymbol{\Phi}_r^{\bot})^*$, which renders $\bfL^{\rm piDMD}$ unitary, introduces extra error relative to the classical DMD. 

\begin{figure}[htbp]
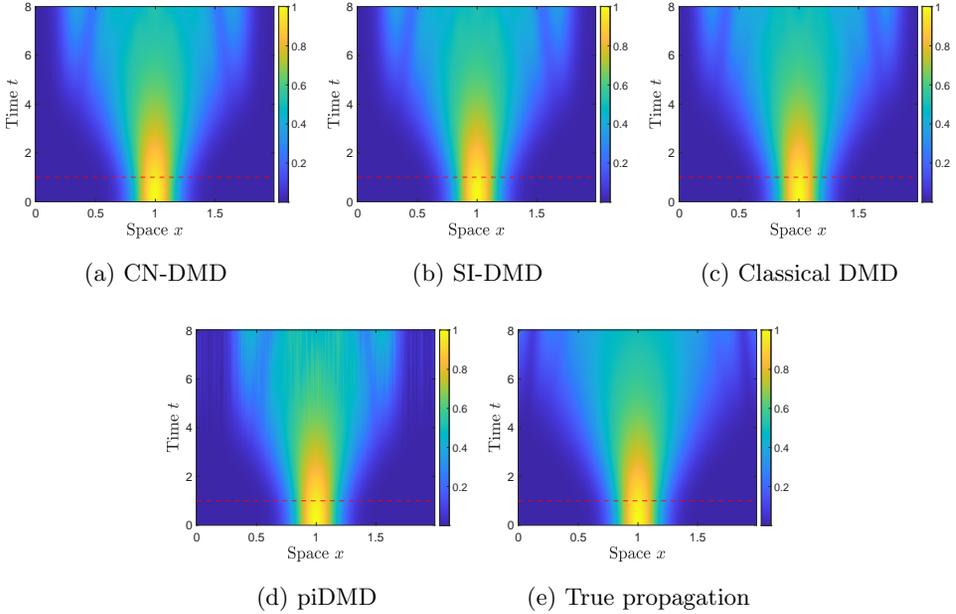

\centering
\begin{subfigure}[t]{0.32\textwidth}
    \includegraphics[width=\linewidth]{./figs/propagation/CN-DMD_propagation.pdf}
    \subcaption{CN-DMD}
\end{subfigure}
\begin{subfigure}[t]{0.32\textwidth}
    \includegraphics[width=\linewidth]{./figs/propagation/SI-DMD_propagation.pdf}
    \subcaption{SI-DMD}
\end{subfigure}
\begin{subfigure}[t]{0.32\textwidth}
    \includegraphics[width=\linewidth]{./figs/propagation/Exact_DMD_propagation.pdf}
    \subcaption{Classical DMD}
\end{subfigure}
\begin{subfigure}[t]{0.32\textwidth}
    \includegraphics[width=\linewidth]{./figs/propagation/Direct_piDMD_propagation.pdf}
    \subcaption{piDMD}
\end{subfigure}
\begin{subfigure}[t]{0.32\textwidth}
    \includegraphics[width=\linewidth]{./figs/propagation/True_propagation.pdf}
    \subcaption{True propagation}
\end{subfigure}
\caption{Forward wave propagation obtained by different DMD-type methods: (a) CN-DMD; (b) SI-DMD; (c) Classical DMD; (d) piDMD; compared with (e) True propagation. The red dotted line indicates the final time of the given data.}
\label{fig:forward_propagation}
\end{figure}

\subsection{Mass and energy conservation under noise}
\label{subsec:noise}
In this experiment, we investigate the accuracy of reconstruction and prediction under noisy data, as well as the mass and energy conservation properties of different methods in predictive tasks.

We consider the semiclassical \Sch\ equation~\eqref{eq:schrodinger_equation} with an initial condition of the WKB type~\eqref{eq:propagation_forward_initial_condition} with 
$n_0(x)$ and $S_0(x)$ defined in \eqref{eq:propagation_forward_initial_condition_explicit}. We further choose the spatial domain \([a,b] = [0,1]\), the harmonic oscillator \(V(x) = 10x^2\), and the scaled Planck constant \(\varepsilon = 10^{-2}\).

We set the time-step size to \(\tau = 10^{-2}\) and the spatial mesh size to \(h = 4 \times 10^{-3} \). And we construct the data matrix
\(
\bfX = [\bfu_0, \bfu_1, \bfu_2, \ldots, \bfu_{79}] \in \C^{250 \times 80},
\)
where $\bfu_k$ denotes the wave function evaluated on an equally spaced spatial grid at time $k\tau$. The data are generated using the time-splitting method with temporal and spatial mesh sizes \(\tau_e = h_e = 10^{-3}\), and then downsampled onto the coarser grid used in the data matrix. The training data are then generated by adding complex Gaussian noise to \(\bfX\) elementwisely, given by
\[
    \eta = \frac{\sigma}{\sqrt{2}}\bigl(\eta_1 + \imath \eta_2\bigr) \stackrel{i.i.d}{\sim}  \mathcal{CN}(0,\sigma^2),
\]
where \(\eta_1\) and \(\eta_2\) are independent standard normal random variables, and the noise level is controlled by \(\sigma\).


We measure the accuracy by computing the relative error between the predicted and true wave functions,
\begin{equation*}
    \err_k = \bigl\|\bfx^{\text{pred}}_k - \bfx^{\text{true}}_k \bigr\|_{2} \,/\, \bigl\|\bfx^{\text{true}}_k\bigr\|_{2}
\end{equation*}
at time $t_k = k\tau$. In addition, we quantify mass and energy conservation by measuring the relative variation
\begin{align}
\mv_k &= \bigl| \| \bfx^{\text{pred}}_k \|_2 - \| \bfx^{\text{pred}}_0 \|_2 \bigr|
        \,/\, \| \bfx^{\text{pred}}_0 \|_2,
\label{eq:rel_mass_variation}
\\
\ev_k &= \bigl| |E(\bfx^{\text{pred}}_k)| - |E(\bfx^{\text{pred}}_0)| \bigr|
        \,/\, |E(\bfx^{\text{pred}}_0)|,
\label{eq:rel_energy_variation}
\end{align}
respectively, where $E(\cdot)$ denotes the discretized energy defined in~\eqref{eq:discretized_energy}. We test several DMD-type methods and record their training and prediction errors under different noise levels in Figure~\ref{fig:prediction_under_different_level_noise}, as well as their mass and energy conservation properties in Figure~\ref{fig:mass_energy_conservation}.

Figure~\ref{fig:prediction_under_different_level_noise} shows that the three methods incorporating physical constraints are significantly more robust than classical DMD under noisy conditions. Classical DMD tends to overfit the noisy training data, leading to prediction errors that grow much more rapidly than those of the structure-preserving methods in high-noise regimes.

Moreover, our proposed CN-DMD and SI-DMD exhibit noise robustness comparable to that of piDMD. The robustness of piDMD can be attributed to its equivalence to a total least square formulation with a unitary constraint, which is known to improve stability under noisy measurements. It demonstrates the noise robustness of our methods.

Figure~\ref{fig:mass_energy_conservation} demonstrates that, under noisy data, classical DMD fails to preserve mass, whereas our methods CN-DMD and SI-DMD maintain both discretized mass and energy with high accuracy.


\begin{figure}[tb]
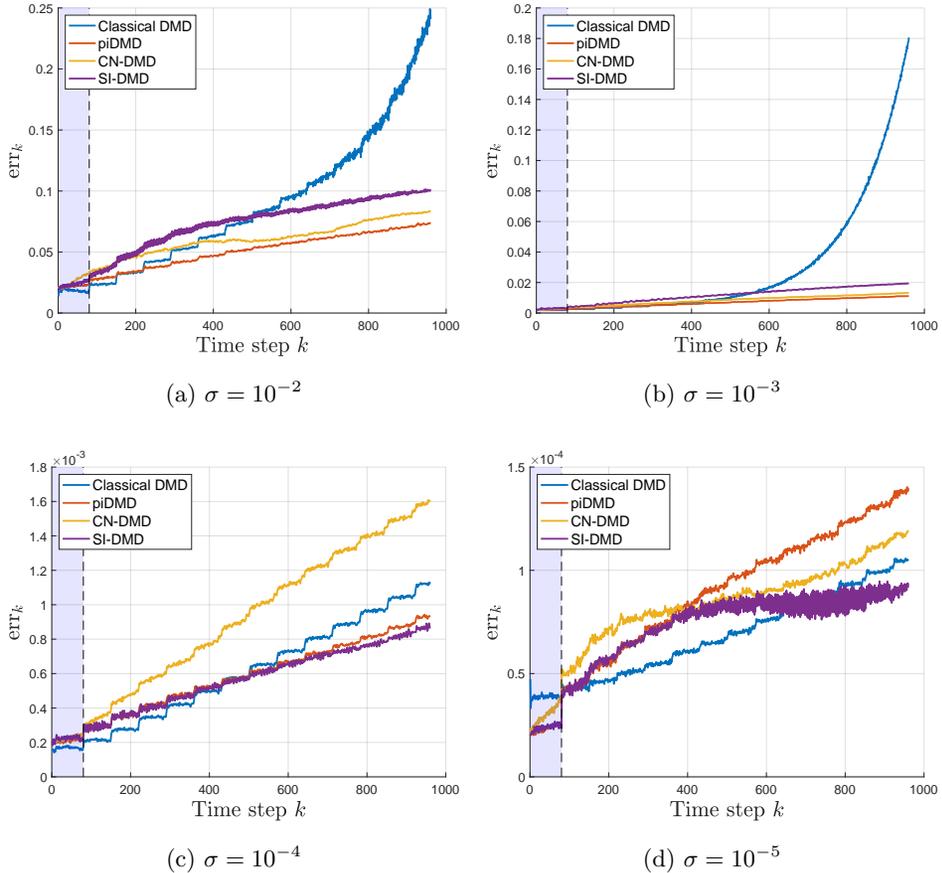

\centering
\begin{subfigure}[t]{0.48\textwidth}
    \includegraphics[width=\textwidth]{figs/noise_robust/various_DMDs_noise_nl0.01.pdf}
    \subcaption{$\sigma = 10^{-2}$}
\end{subfigure}
\begin{subfigure}[t]{0.48\textwidth}
    \includegraphics[width=\textwidth]{figs/noise_robust/various_DMDs_noise_nl0.001.pdf}
    \subcaption{$\sigma = 10^{-3}$}
\end{subfigure}

\begin{subfigure}[t]{0.48\textwidth}
    \includegraphics[width=\textwidth]{figs/noise_robust/various_DMDs_noise_nl0.0001.pdf}
    \subcaption{$\sigma = 10^{-4}$}
\end{subfigure}
\begin{subfigure}[t]{0.48\textwidth}
    \includegraphics[width=\textwidth]{figs/noise_robust/various_DMDs_noise_nl1e-05.pdf}
    \subcaption{$\sigma = 10^{-5}$}
\end{subfigure}
\caption{Training and prediction error under different levels of noise (a) $\sigma = 10^{-2}$; (b) $\sigma = 10^{-3}$; (c) $\sigma = 10^{-4}$; (d) $\sigma = 10^{-5}$.
}
\label{fig:prediction_under_different_level_noise}
\end{figure}

\begin{figure}[tb]
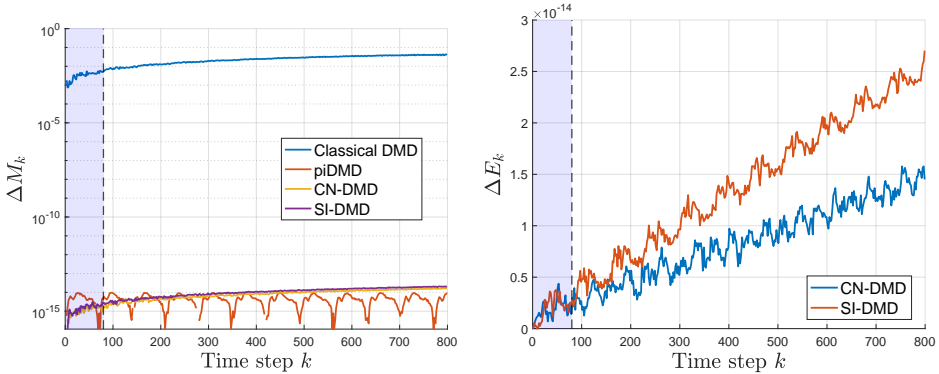

\centering
\begin{subfigure}[t]{0.48\textwidth}
    \includegraphics[width=\textwidth]{figs/noise_robust/various_DMDs_mass_noise.pdf}
\end{subfigure}
\begin{subfigure}[t]{0.48\textwidth}
    \includegraphics[width=\linewidth]{./figs/noise_robust/various_DMDs_energy_noise.pdf}
\end{subfigure}
\caption{
(a) Mass variation. (b) Energy variation. The dotted line indicates the final time step of the train data.
}
\label{fig:mass_energy_conservation}
\end{figure}

\subsection{Comparison of computational costs} 
\label{subsec:highdim}
In this experiment, we mainly focus on the computational efficiency of those methods. We consider the semiclassical \Sch\ equation~\eqref{eq:schrodinger_equation} with an initial condition of the WKB type~\eqref{eq:propagation_forward_initial_condition}
where
\begin{equation*}
n_0(x) = \bigl(e^{-25(x - \frac{a+b}{2})^2}\bigr)^2, \quad 
S_0(x)=-\frac{1}{5} \ln \bigl(e^{5(x - \frac{a+b}{2})}+e^{-5(x - \frac{a+b}{2})}\bigr), 
\quad x \in \R.
\end{equation*}
We choose the spatial domain \([a, b] = [0, 10]\), with a constant potential \(V(x) \equiv 10\) and the scaled Planck constant \(\varepsilon = 10^{-2}\).

We set both the time-step size $\tau$ and the spatial mesh size $h$ to $10^{-3}$. And we construct the train data as the matrix $\bfX = [\bfu_0, \bfu_1, \dots, \bfu_{48}, \bfu_{49}] \in \mathbb{C}^{10000 \times 50}$, where $\bfu_k$ denotes the wave function evaluated on an equally spaced spatial grid at time $k\tau$. The data are generated using the time-splitting method with the same temporal and spatial mesh sizes $\tau_e = h_e = 10^{-3}$. Our task is to evaluate the performance
of different methods over a time horizon that is eight times longer than the given data window.

We use the relative prediction error 
\begin{equation}
\label{eq:relative_err}
    e_{\mathrm{rel}} = \bigl\|\bfX^{\mathrm{pred}} - \bfX^{\mathrm{true}} \bigr\|_{\mathrm{F}}\, /\,
         \bigl\|\bfX^{\mathrm{true}}\bigr\|_{\mathrm{F}},
\end{equation}
to simultaneously quantify both fitting and prediction accuracy. Here, \(\bfX^{\mathrm{pred}},\, \bfX^{\mathrm{true}} \in \C^{10000 \times 400}\) collect the data at time points \(t_k\) for \(0 \le k \le 399\), including both the fitting interval and the prediction horizon. In addition, we evaluate mass conservation by measuring the relative mass $\mv_{399}$~\eqref{eq:mass} at the final prediction time $t_{399}$.

We report the runtime of these methods in Table~\ref{tab:runtime}. The runtime consists of two components: the time required to execute the DMD-type algorithm and the time required to generate predictions up to the prescribed final time. The results show that, compared with classical DMD, our methods achieve significantly better mass preservation while maintaining comparable computational cost and prediction accuracy. On the other hand, compared with direct piDMD, our methods attain higher accuracy with substantially reduced runtime, without requiring any data-preprocessing techniques. 

\begin{table}[H]
\footnotesize
\centering
\caption{Runtime and accuracy comparison of different DMD-type algorithms.}
\label{tab:runtime}
\begin{tabular}{l c c c}
\hline
Method    & Runtime (seconds) & $e_{\text{rel}}$       & $\mv_{399}$  \\
\hline
CN-DMD    & 0.72              & 6.54\ee{-2}  & 2.46\ee{-14}           \\
SI-DMD    & 0.85              & 6.98\ee{-2}  & 8.98\ee{-16}           \\
Classical DMD & 0.81              & 4.57\ee{-2}  & 4.75\ee{-4} \\
piDMD    & 805.07      & 1.67\ee{-1}  & 2.45\ee{-13}\\
\hline
\end{tabular}
\end{table}

\subsection{Performance in the semiclassical regime}
In this experiment, we compare the performance of our proposed CN-DMD and SI-DMD methods under different values of the scaled Planck constant $\varepsilon$. We consider the \Sch\ equation with the WKB initial condition \eqref{eq:propagation_forward_initial_condition}, $\eqref{eq:propagation_forward_initial_condition_explicit}$ with the spatial domain \([a,b] = [0, 1]\) and potential \(V(x) = 10x^2\) for all different $\varepsilon$.

For comparison purpose, we set the time step size \(\tau\) to \(10^{-2}\) and the spatial mesh size \(h\) to  \(10^{-3}\) for all considered $\varepsilon$. We further construct the data matrix \(\bfX^{\varepsilon} = [\bfu_0^{\varepsilon}, \bfu_1^{\varepsilon}, \dots, \bfu_{m-1}^{\varepsilon}] \in \C^{1000 \times m}\), where \(\bfu_k^{\varepsilon}\) denotes the wave function evaluated on an equally spaced spatial grid over \([a, b]\) with spatial mesh size \(h\) at time \(k\tau\) under the scaled Planck constant $\varepsilon$, and \(m\) denotes the horizon of the train data $\bfX^{\varepsilon}$. The data are generated using the time-splitting method with the temporal and spatial mesh sizes $\tau_e = h_e = 10^{-4}$ and are then downsampled onto the corresponding grids used in the data matrix.

Given the training data \(\bfX^{\varepsilon}\), we compare the performance of different methods in both training and long-time extrapolation. Specifically, the models are trained on data over a time horizon of length \(m\) and then used to predict dynamics over a horizon nine times longer than the training interval. We vary the number of snapshots and the scaled Planck constant over the ranges
\begin{equation*}
    m \in \{10,20,40,60,80\}, \qquad 
    \varepsilon \in \{2^{-0},2^{-2},2^{-4},2^{-6}\},
\end{equation*}
respectively. The prediction accuracy is evaluated using the relative Frobenius-norm error defined in~\eqref{eq:relative_err}. The results are reported in Table~\ref{tab:various_planck_accuracy}.

\begin{table}[htb]
\footnotesize
\centering
\caption{training and prediction errors (relative Frobenius norm) of CN-DMD and SI-DMD for different snapshot numbers \(m\) and scaled Planck constants \(\varepsilon\).}
\label{tab:various_planck_accuracy}
\setlength{\tabcolsep}{8pt}
\renewcommand{\arraystretch}{1.3}
\begin{tabular}{cc cccc}
\toprule
\multicolumn{2}{c}{Snapshots $m$} & \multicolumn{4}{c}{$\varepsilon$} \\
\cmidrule(lr){3-6}
 &  & $2^{-0}$ & $2^{-2}$ & $2^{-4}$ & $2^{-6}$ \\
\midrule
\multirow{2}{*}{10}  & CN-DMD & $9.37\ee{-4}$ & $1.17\ee{-1}$ & $2.81\ee{-2}$ & $8.54\ee{-1}$ \\
                     & SI-DMD & $4.32\ee{-4}$ & $2.37\ee{-1}$ & $6.00\ee{-2}$ & $4.96\ee{-1}$ \\
\midrule
\multirow{2}{*}{20}  & CN-DMD & $3.55\ee{-4}$ & $1.11\ee{-2}$ & $4.34\ee{-3}$ & $3.60\ee{-1}$ \\
                     & SI-DMD & $4.33\ee{-4}$ & $5.00\ee{-4}$ & $6.40\ee{-3}$ & $5.01\ee{-1}$ \\
\midrule
\multirow{2}{*}{40}  & CN-DMD & $3.35\ee{-4}$ & $1.47\ee{-4}$ & $2.58\ee{-4}$ & $1.27\ee{-1}$ \\
                     & SI-DMD & $4.59\ee{-4}$ & $1.13\ee{-4}$ & $2.76\ee{-4}$ & $1.61\ee{-1}$ \\
\midrule
\multirow{2}{*}{60}  & CN-DMD & $5.86\ee{-5}$ & $1.07\ee{-4}$ & $1.64\ee{-4}$ & $2.72\ee{-4}$ \\
                     & SI-DMD & $1.49\ee{-4}$ & $1.10\ee{-4}$ & $9.60\ee{-5}$ & $3.12\ee{-2}$ \\
\midrule
\multirow{2}{*}{80}  & CN-DMD & $5.57\ee{-5}$ & $6.57\ee{-5}$ & $5.24\ee{-5}$ & $3.23\ee{-5}$ \\
                     & SI-DMD & $4.41\ee{-5}$ & $1.03\ee{-4}$ & $8.68\ee{-5}$ & $1.13\ee{-4}$ \\
\bottomrule
\end{tabular}
\end{table}

Table~\ref{tab:various_planck_accuracy} shows that our two methods, CN-DMD and SI-DMD, exhibit comparable performance across different choices of the number of snapshots \(m\) and the reduced Planck constant \(\varepsilon\). As \(\varepsilon\) decreases, a larger number of snapshots $m$ is required to accurately capture the increasingly oscillatory dynamics. Nevertheless, given sufficiently long training data, both methods achieve satisfactory accuracy in reconstruction and long-time prediction of the dynamics. These results indicate that our methods are well suited as surrogate models to accelerate traditional numerical solvers and therefore reduce overall computational costs.

\subsection{The nonlinear \Sch\ equation}
In this experiment, we assess the performance of our methods on the weak \(O(\varepsilon)\) defocusing Gross-Pitaevskii dynamics in a harmonic trap~\cite{baoUniformErrorEstimates2012},
\begin{equation}
\label{eq:nonlinear_schrodinger_equation}
    \varepsilon u_t^{\varepsilon} 
    - \imath \frac{\varepsilon^2}{2} u_{xx}^{\varepsilon}
    + \imath V(x) u^{\varepsilon}
    + \varepsilon |u^{\varepsilon}|^2 u^{\varepsilon} = 0,
    \qquad a < x < b,\; t > 0,
\end{equation}
under periodic boundary conditions, with initial conditions given in
\eqref{eq:propagation_forward_initial_condition}, where
\begin{equation}
\label{eq:nonlinear_initial_condition_explicit}
n_0(x) = \bigl(e^{-50(x - \frac{a+b}{2})^2}\bigr)^2,
\qquad
S_0(x) = -\frac{1}{50}(x-a)(x-b),
\qquad x \in \R.
\end{equation}
The total mass and energy of this system are also onserved~\cite{baoNumericalStudyTimesplitting2003}. In the numerical tests, we choose the spatial domain \([a,b] = [-3,3]\), the scaled Planck constant \(\varepsilon = 10^{-2}\), and the harmonic potential \(V(x) = 10x^2\). Thus we can apply CN-DMD and SI-DMD to predict dynamics of the system effectively.

We set the time-step size to be \(\tau = 3 \times 10^{-2}\) and the spatial mesh size to be \(h = 6 \times 10^{-3}\). We further construct the data matrix $\bfX = [\bfu_0, \bfu_1, \dots, \bfu_{48}, \bfu_{49}] \in \mathbb{C}^{1000 \times 50}$, where $\bfu_k$ denotes the wave function evaluated on an equally spaced spatial grid over $[a, b]$ with spatial mesh size $h$ at time $t_k = k\tau$. The data are generated using the time-splitting method with the time-step size \(\tau_e = 10^{-2}\) and the spatial mesh size \(h_e = 6 \times 10^{-3}\), and are then downsampled onto the coarser grid used in the data matrix. Under this setting, we compare the performance of several DMD-type methods and present the results in Figure~\ref{fig:nonlinear_error}.

Figure~\ref{fig:nonlinear_error}(a) shows that methods incorporating physical conservation laws outperform the classical DMD for this nonlinear equation. The classical DMD suffers from severe underfitting, resulting in noticeably poorer predictive accuracy.
Figure~\ref{fig:nonlinear_error}(b) illustrates that, with the inclusion of time-delay embedding~\cite{kambTimeDelayObservablesKoopman2020}, the corresponding high-order CN-DMD and SI-DMD variants achieve higher accuracy and slower error accumulation than the other methods considered.

\begin{figure}[!tbp]
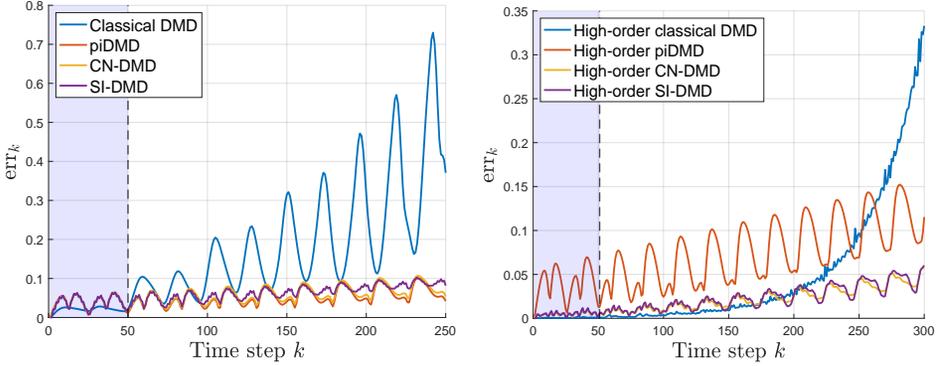

\centering
\begin{subfigure}[t]{0.48\textwidth}
    \includegraphics[width=\linewidth]{./figs/nonlinear/various_DMDs_trajectory.pdf}
\end{subfigure}
\begin{subfigure}[t]{0.48\textwidth}
    \includegraphics[width=\linewidth]{./figs/nonlinear/various_HODMDs_trajectory.pdf}
\end{subfigure}
\caption{Comparison of DMD-type methods for the nonlinear \Sch\ equation~\eqref{eq:nonlinear_schrodinger_equation}.}
\label{fig:nonlinear_error}
\end{figure}

\section{Conclusions}
\label{sec:conclusion}
In this paper, we proposed two novel DMD-type methods, the {Crank--Nicolson} DMD (CN-DMD) and the semi-implicit DMD (SI-DMD), for learning and predicting the highly oscillatory dynamics of the semiclassical \Sch\ equation. Unlike classical DMD approaches, our methods focus on learning the underlying \Sch\ operator rather than the evolution operator. This innovation enables the systematic incorporation of both mass and energy conservation laws and further endows the resulting models with built-in model order reduction capabilities, without
the requirement for additional dimensionality-reduction preprocessing as in piDMD. 
Moreover, we provide preliminary theoretical estimates for both training and prediction errors, offering theoretical guarantees for the proposed methods. 
Extensive numerical experiments demonstrate the noise robustness, computational efficiency, and transferability to other equations of the proposed methods, indicating their potential as data-driven surrogates for traditional numerical PDE solvers in challenging tasks.


\bibliographystyle{siamplain}
\bibliography{references}
\end{document}